\def\R{\mathbb{R}}
\def\N{\mathbb{N}}
\newtheorem{defn}{Definition}
\newtheorem{lemma}[defn]{Lemma}
\newtheorem{proposition}[defn]{Proposition}
\newtheorem{theorem}[defn]{Theorem}
\newenvironment{proof}[1]{
  \trivlist \item[\hskip \labelsep{\it #1}]}{\hfill\mbox{$\square$}
  \endtrivlist}
\title{A few more extensions of Putinar's Positivstellensatz \\ to non-compact sets}
\date{}
\author{Paula Escorcielo\footnote
{{\scriptsize Partially supported by the Argentinian grants} {\footnotesize UBACYT 20020190100116BA} 
{\scriptsize and} {\footnotesize PIP 11220130100527CO CO\-NI\-CET}. 
\newline \textbf{MSC Classification:} 12D15, 13J30, 14P10.
\newline \textbf{Keywords:} Putinar's Positivstellensatz, Sums of squares, Degree bounds.} 
\qquad \qquad
Daniel Perrucci$^{*}$
\\[3mm]
{\small Departamento de Matem\'atica, FCEN, Universidad de Buenos Aires, Argentina}\\ 
{\small  IMAS, CONICET--UBA, Argentina}
}
\begin{document}

\maketitle

\begin{abstract}
We extend previous results about Putinar's Positivstellensatz for cylinders of type $S \times {\mathbb R}$
to sets of type $S \times {\mathbb R}^r$ in some special cases 
taking into account $r$ and the degree of the polynomial
with respect to the variables
moving in ${\mathbb R}^r$ (this is to say, in the non-bounded directions). 
These special cases are in correspondence with the ones where the equality between the 
cone of non-negative polynomials and the cone of sums of squares holds.  Degree bounds are provided.  
\end{abstract}

\section{Introduction}

One of the most important 
results in the theory of sums of squares and certificates of non-negativity is 
Putinar's Positivstellensatz (\cite{Put}). 
This theorem states that
given $g_1, \dots, g_s \in \R[\bar X] = \R[X_1, \dots, X_n]$ such that the quadratic module generated by $g_1, \dots, g_s$,
$$
M(g_1, \dots, g_s) = \left\{\sigma_0 + \sigma_1g_1 + \dots + \sigma_sg_s \ | \ 
\sigma_0, \sigma_1, \dots, \sigma_s \in \sum \R[\bar X]^2\right\}
\subset \R[\bar X]
$$
is archimedean,  every 
$f \in \R[\bar X]$ positive on 
$$
S = \{\bar x \in \R^n \, | \, g_1(\bar x) \ge 0, \dots, g_s(\bar x) \ge 0\}
$$
belongs to $M(g_1, \dots, g_s)$.
An explicit expression of $f$ as a member of $M(g_1, \dots, g_s)$  is a certificate of the non-negativity of
$f$ on $S$. 
Note that the condition of
archimedeanity on
$M(g_1, \dots, g_s)$  
implies that $S$ is compact 
(see for instance \cite[Chapter 5] {Marshall_book} for the definition and equivalences of archimedeanity). 

It is of interest to look for a bound for the degrees of all the different terms  
in the representation of $f$ as en element of 
$M(g_1, \dots, g_s)$ which existence is ensured by Putinar's Positivstellensatz. 
An answer to this question 
was given by  Nie and Schweighofer (\cite[Theorem 6]{NieSchw}).
In the particular case where $S$ is the hypercube $[0, 1]^n$, 
improved bounds were given in \cite{deKLau} and \cite{Mag}.

Consider the norm $\| \cdot \|$ in $\R[\bar X]$ defined as follows. 
$$
\hbox{For }\, f = \sum_{\substack{\alpha \in \N_{0}^n \\ |\alpha| \le d}} \binom{|\alpha|}{\alpha}a_\alpha \bar X^\alpha,
\quad \quad
\| f \| = \max \{ |a_\alpha| \, | \, \alpha \in \N_{0}^n, |\alpha| \le d \} 
$$ 
where for 
$\alpha = (\alpha_1, \dots, \alpha_n) \in  \N_{0}^n$, $
|\alpha| = \alpha_1 + \dots + \alpha_n$
and
$
\binom{|\alpha|}{\alpha} = \frac{|\alpha|!}{\alpha_1!\dots \alpha_n!} \in \N.
$
Then \cite[Theorem 6]{NieSchw} is the following result.  

\begin{theorem}[Putinar's Positivstellensatz with degree bound]\label{thm:NieSchw}
Let
$g_1, \dots, g_s \in \R[\bar X]$ 
such that 
$$
\emptyset \ne S  = \{\bar x \in \R^{n} \ | \ g_1(\bar x) \ge 0, \dots, g_s(\bar x) \ge 0\} \subset 
(-1, 1)^n,
$$
and the quadratic module $M(g_1, \dots, g_s)$ is archimedean. 
There exists a positive constant $c$ such that for every $f \in \R[\bar X]$  
positive on $S$, if
$\deg f = d$ and
$\min\{f(\bar x) \,  | \, \bar x \in S \} = f^* > 0$,   
then $f$ can be written as
$$
f = \sigma_0 + \sigma_1g_1 + \dots + \sigma_sg_s \in M(g_1, \dots, g_s)
$$
with $\sigma_0, \sigma_1,  \dots, \sigma_s \in \sum \R[\bar X]^2$ and 
$$
\deg(\sigma_0), \deg(\sigma_1g_1),  \dots, \deg(\sigma_sg_s)
\le 
c \, {\rm e}^{\left(\frac{\| f \|d^2n^d}{f^*}\right)^c}.
$$
\end{theorem}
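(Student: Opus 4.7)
The plan is to combine three ingredients: (i) the archimedean hypothesis, which quantifies how tightly $g_1, \ldots, g_s$ "enclose" $S$ inside $(-1,1)^n$; (ii) a quantitative Łojasiewicz-type inequality that measures how quickly $f$ can approach zero near $\partial S$; and (iii) an explicit approximation of $f$ by a Putinar representative, reducing the problem to a Schmüdgen-style representation that is then converted into a representation in $M(g_1,\ldots,g_s)$.

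First I would exploit archimedeanity to extract a polynomial $h \in M(g_1,\ldots,g_s)$ with $h \ge N - \sum_{i=1}^n X_i^2$ for some explicit $N$; this, together with $S \subset (-1,1)^n$, lets us reduce any computation to the compact box $[-1,1]^n$ and gives uniform upper bounds on $\|g_i\|$ on $S$. The degree and coefficient size of such an $h$ give the universal constant $c$ appearing in the final bound; they depend only on the description of $M(g_1,\ldots,g_s)$, not on $f$.

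The technical heart is the passage from the qualitative statement $f > 0$ on $S$ to a quantitative one. Here I would apply an effective Łojasiewicz inequality: for some exponent $L = L(n,d)$ and some constant $\kappa = \kappa(\|f\|, f^*, n, d)$, one has
\[
f(\bar x) + \kappa \sum_{i=1}^s \max\{-g_i(\bar x), 0\}^L \ge \tfrac{f^*}{2} \quad \text{on } [-1,1]^n.
\]
The main obstacle is controlling $L$: the known effective Łojasiewicz exponents for semialgebraic sets defined by polynomials of degree $\le d$ in $n$ variables grow like $d^{\mathcal{O}(n)}$, and it is precisely this growth which is responsible for the $n^d$ factor in the bound. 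I would then approximate $f - \tfrac{f^*}{4}$ on $[-1,1]^n$ by a Bernstein- or Pólya-type positive combination, obtaining a Schmüdgen-style representation $f = \sum_{e \in \{0,1\}^s} \sigma_e\, g_1^{e_1}\cdots g_s^{e_s}$ with explicit degree bounds in terms of $\|f\|$, $f^*$, $n$, $d$, and $L$.

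Finally, to convert this Schmüdgen-style representation into a genuine element of $M(g_1,\ldots,g_s)$, I would use the archimedean module element $h$ found in the first step to absorb the mixed products $g_1^{e_1}\cdots g_s^{e_s}$: for any sufficiently large constant $C$, the polynomial $C - g_i g_j$ lies in $M(g_1,\ldots,g_s)$, so one can iteratively rewrite each product as $(\text{sum of squares}) + \sum_k \sigma_k g_k$ at the cost of a multiplicative blow-up in the degree controlled by $C$. Composing the estimates from the three steps, the sub-double-exponential dependencies combine to yield a bound of the form $c\, {\rm e}^{(\|f\|d^2 n^d / f^*)^c}$ as claimed, with $c$ depending only on the archimedean data of $M(g_1,\ldots,g_s)$.
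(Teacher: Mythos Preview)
This theorem is not proved in the present paper: it is quoted verbatim as \cite[Theorem 6]{NieSchw} and used only as background. So there is no ``paper's own proof'' to compare against here. That said, the proof in \cite{NieSchw}---whose structure is visible in the proof of Proposition~\ref{prop:main} of this paper, which adapts it---is quite different from your outline, and your outline contains a genuine gap.

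The actual argument does not pass through a Schm\"udgen representation at all. After a linear change of variables sending $(-1,1)^n$ into the open simplex $\widetilde\Delta_n^\circ$, one subtracts from $f$ a carefully chosen element of $M(g_1,\dots,g_s)$, namely $\lambda\sum_i g_i(g_i-1)^{2k}$, so that the difference $h$ becomes positive on \emph{all} of $\widetilde\Delta_n$, not just on $S$. The \L ojasiewicz inequality used compares ${\rm dist}(\bar x,S)$ to $\min_i g_i(\bar x)$ and its exponent $c_1$ depends only on $g_1,\dots,g_s$, not on $f$ or $d$; it is absorbed into the constant $c$. One then homogenizes $h$, applies the effective P\'olya theorem of \cite{PR}, and dehomogenizes to write $f$ as $\lambda\sum_i g_i(g_i-1)^{2k}$ plus a nonnegative combination of products $(1-\sum X_j)^{\alpha_0}\bar X^{\bar\alpha}$. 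Each such product is handled by reducing the exponent vector modulo~$2$ and invoking the \emph{qualitative} Putinar theorem once and for all on the $2^{n+1}$ polynomials $(1-\sum X_j)^{v_0}\bar X^{\bar v}$, $v\in\{0,1\}^{n+1}$. The factor $n^d$ arises not from any \L ojasiewicz exponent but from the norm blow-up under the linear map $(-1,1)^n\to\widetilde\Delta_n^\circ$.

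Your step~(iii) is the problematic one. Knowing that $C-g_ig_j\in M(g_1,\dots,g_s)$ gives $g_ig_j=C-(\text{element of }M)$, which is a \emph{difference}, not a representation of $g_ig_j$ as an element of $M$; you cannot ``absorb'' a mixed product $g_1^{e_1}\cdots g_s^{e_s}$ into $M$ this way while keeping all coefficients sums of squares. Converting a Schm\"udgen certificate to a Putinar certificate with controlled degrees is in general a hard problem and is exactly what the Nie--Schweighofer argument avoids by never producing mixed products in the first place. Your identification of the $n^d$ term with an effective \L ojasiewicz exponent of size $d^{\mathcal O(n)}$ is also off: those two quantities are not of the same order, and in the actual proof the \L ojasiewicz data are independent of $f$.
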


A natural question is if it is possible to relax the 
archimedeanity hyphothesis 
and to 
extend Putinar's Positivstellensatz to 
cases where $S$ is non-compact. 
Even though it is well-known that this is not possible in full generality, results in this direction were given in 
\cite{KuhlMars}, \cite{KuhlMarsSchw}, \cite{Mar} and \cite{EscPer}.
We introduce the notation and definitions needed to state our main result from 
\cite{EscPer}.

We note
$$C = \{(y, z) \in \R^2 \ | \  y^2 + z^2 = 1\}.$$
For 
$$
f = \sum_{0 \le i \le m} f_i(\bar X) Y^i \in \R[\bar{X},Y]
$$
with $\deg_Y f = m$, we note 
$$
\bar f = \sum_{0 \le i \le m} f_i(\bar X) Y^iZ^{m-i} \in \R[\bar{X},Y, Z]
$$
its homogeneization only with respect to the variable $Y$.
Such an $f$ is said to be \emph{fully} $m$-\emph{ic} on $S$
if for every $\bar x \in S$, $f_m(\bar x) \ne 0$.
We define the norm $\| \cdot \|_{\bullet}$ on $\R[\bar X, Y]$ as follows.  
$$
\hbox{For }\, 
 f= \sum_{0 \le i \le m} \, \sum_{\substack{\alpha \in \N_{0}^n\\|\alpha| \le d}} \binom{|\alpha|}{\alpha} a_{\alpha,i} \bar{X}^\alpha Y^i, \quad \quad  
 \| f \|_{\bullet}= \max\{|a_{\alpha,i}| \, | \, 0 \le  i \le m, \alpha \in \N_0^n, |\alpha| \le d\}.
$$
Note that in this norm, the variable $Y$ 
is not considered in the same way than the variables $\bar X$. 
Finally, for $g_1, \dots, g_s \in \R[\bar X]$, we note
$$
M_{\R[\bar X, Y]}(g_1, \dots, g_s) = \left\{ \sigma_0 + \sigma_1g_1 + \dots + \sigma_sg_s \, | \, 
\sigma_0, \sigma_1, \dots, \sigma_s \in \sum \R[\bar X, Y]^2\right\} \subset \R[\bar X, Y]$$
the quadratic module generated by $g_1, \dots, g_s$ in $\R[\bar X, Y]$,  while
the notation $M(g_1, \dots, g_s)$ is kept for 
the quadratic module generated by $g_1, \dots, g_s$ in $\R[\bar X]$.

In \cite[Theorem 7]{EscPer}, we prove an 
extension of Putinar's
Positivstellensatz to cylinders of type $S \times \R$
which is the following result.
\begin{theorem}\label{th:main_paper_anterior}
Let
$g_1, \dots, g_s \in \R[\bar X]$ 
such that 
$$
\emptyset \ne S  = \{\bar x \in \R^{n} \ | \ g_1(\bar x) \ge 0, \dots, g_s(\bar x) \ge 0\} \subset (-1, 1)^n
$$
and the quadratic module $M(g_1, \dots, g_s) \subset \R[\bar X]$ is archimedean. 
There exists a positive constant $c$ such that for every $f \in \R[\bar X, Y]$ positive on $S \times \R$, 
if $\deg_{\bar X} f = d$, $\deg_{Y} f = m$ with $f$ fully $m$-ic on $S$ and 
$$
\min\{\bar f(\bar x, y, z) \,  | \, \bar x \in S, (y, z) \in C \} = f^{\bullet} >0, 
$$ 
then $f$ can be written as
$$
f = \sigma_0 + \sigma_1g_1 + \dots + \sigma_sg_s \in M_{\R[\bar X, Y]}(g_1, \dots, g_s)
$$
with $\sigma_0, \sigma_1,  \dots, \sigma_s \in \sum \R[\bar X, Y]^2$ and 
$$
\deg(\sigma_0), \deg(\sigma_1g_1),  \dots, \deg(\sigma_sg_s) 
\le c (m+1)2^{\frac{m}{2}}  {\rm e}^{ \left( \frac{\| f \|_{\bullet}(m+1)d^2(3n)^d}{f^{\bullet}} \right)^{c} }.
$$
\end{theorem}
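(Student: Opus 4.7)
The plan is to reduce the non-compact cylinder $S \times \R$ to the compact case of Theorem \ref{thm:NieSchw} by homogenizing with respect to $Y$, applying the compact bound on $S \times C$, and then transporting the resulting representation in $\R[\bar X, Y, Z]$ back to $\R[\bar X, Y]$.

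First I would observe that the hypotheses that $f$ is fully $m$-ic on $S$ and positive on $S \times \R$ force $m$ to be even and $f_m$ to be strictly positive throughout $S$, by analyzing $f(\bar x, Y) \sim f_m(\bar x) Y^m$ as $|Y| \to \infty$ for fixed $\bar x \in S$. The homogenization $\bar f(\bar X, Y, Z) = \sum_i f_i(\bar X) Y^i Z^{m-i}$ is then bi-homogeneous of degree $m$ in $(Y,Z)$, and the identities $\bar f(\bar x, y, z) = z^m f(\bar x, y/z)$ for $z \neq 0$ together with $\bar f(\bar x, \pm 1, 0) = f_m(\bar x)$ show that $\bar f$ is strictly positive on the compact set $S \times C$, with minimum exactly $f^\bullet$.

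Next, I would apply Theorem \ref{thm:NieSchw} to $\bar f$ on $S \times C \subset \R^{n+2}$ (after harmless rescaling to fit inside $(-1,1)^{n+2}$), using the defining polynomials $g_1, \ldots, g_s$, $1 - Y^2 - Z^2$, and $-(1 - Y^2 - Z^2)$. The enlarged quadratic module is archimedean, since $M(g_1, \ldots, g_s)$ already is and the equality $Y^2+Z^2 = 1$ bounds $Y$ and $Z$. This yields a representation $\bar f = \tau_0 + \sum_i \tau_i g_i + \nu(1 - Y^2 - Z^2)$ with $\tau_i \in \sum \R[\bar X, Y, Z]^2$, $\nu \in \R[\bar X, Y, Z]$, and an explicit degree bound for each piece. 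Carefully comparing a suitable norm of $\bar f$ with $\|f\|_\bullet$ and the total degree of $\bar f$ with $d+m$ is what should lead to the shape of the exponent in the claimed bound.

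The main obstacle is the final dehomogenization step: turning the above representation of $\bar f$ into one of $f$ inside $M_{\R[\bar X, Y]}(g_1, \ldots, g_s)$, with no extra generator $1 - Y^2 - Z^2$. Naively setting $Z = 1$ preserves the SOS nature of each $\tau_i$ but turns $1 - Y^2 - Z^2$ into $-Y^2$, leaving a residual $-\nu(\bar X, Y, 1)\, Y^2$ of uncontrolled sign. To resolve this I would first project the entire identity onto its $(Y,Z)$-homogeneous degree-$m$ component, using the classical structural fact that an SOS which is bi-homogeneous in a block of variables of degree $2t$ decomposes as a sum of squares of polynomials bi-homogeneous in those variables of degree $t$. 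After this projection the $Z = 1$ substitution yields SOS pieces of the correct $Y$-degree, and the remaining residual, now of controlled degree, must be absorbed into the SOS part inductively, using the positivity of $f_m$ on $S$ and the archimedeanity of $M(g_1, \ldots, g_s)$ in $\R[\bar X]$. I expect that this absorption, indexed over the $m+1$ possible $Y$-degrees and iterated roughly $m/2$ times with a degree doubling at each step, is where the $(m+1)\,2^{m/2}$ prefactor in the final bound arises.
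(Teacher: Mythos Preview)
Your plan diverges from the paper's in a way that leaves a real gap at the dehomogenization step.

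The paper (following \cite{EscPer}) does \emph{not} apply Theorem~\ref{thm:NieSchw} as a black box on $S\times C$ with the extra generator $1-Y^2-Z^2$. Instead it reruns the Nie--Schweighofer machinery (\L ojasiewicz-type separation plus P\'olya's theorem) while treating $(Y,Z)\in C$ as a \emph{parameter} throughout. Concretely, one forms
\[
h=\bar f-\lambda\,(Y^2+Z^2)^{m/2}\sum_{i}g_i(g_i-1)^{2k},
\]
chooses $\lambda,k$ so that $h\ge\tfrac12 f^\bullet$ on $\widetilde\Delta_n\times C$, homogenizes in $X_0,\bar X$, and applies the effective P\'olya bound in the $X$-variables only. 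The outcome is an identity whose coefficients $b_\alpha(Y,Z)$ are automatically $(Y,Z)$-homogeneous of degree $m$ and positive on $C$; hence each $b_\alpha(Y,1)$ is a nonnegative univariate polynomial and therefore a sum of squares in $\R[Y]$. No multiplier $1-Y^2-Z^2$ ever appears, so there is nothing to absorb. Incidentally, the factor $2^{m/2}$ in the final bound comes from the binomial coefficients of $(Y^2+Z^2)^{m/2}$ entering $\|h\|_\bullet$, not from any iterated absorption.

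In your route, the representation $\bar f=\tau_0+\sum_i\tau_ig_i+\nu(1-Y^2-Z^2)$ produced by the black box gives $\tau_i\in\sum\R[\bar X,Y,Z]^2$ that are \emph{not} $(Y,Z)$-bihomogeneous. The ``classical structural fact'' you invoke---that a bihomogeneous SOS is a sum of squares of bihomogeneous polynomials---requires the SOS itself to be bihomogeneous; it says nothing about the degree-$m$ $(Y,Z)$-component of an arbitrary SOS. If $\tau_i=\sum_jp_j^2$ with $p_j=\sum_kp_{j,k}$ the $(Y,Z)$-graded decomposition, then $[\tau_i]_m=\sum_j\sum_{k+l=m}p_{j,k}p_{j,l}$, which has no reason to be SOS unless $m$ happens to be the top $(Y,Z)$-degree of every $\tau_i$. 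So the projection step does not salvage the SOS structure, and the subsequent ``inductive absorption'' is left without a starting point. To make your approach work you would essentially have to force the $(Y,Z)$-homogeneity into the certificate from the outset---which is exactly what the paper's parametric P\'olya argument accomplishes.
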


The condition of $f$ being fully $m$-ic was defined originally in \cite{Pow}.
Under this assumption, in  
\cite[Theorem 3]{Pow} Powers obtains an extension of
Schm\"udgen's 
Positivstellensatz (\cite{Schm}) to cylinders of
type $S \times F$ with $S \subset \R^n$ a 
compact semialgebraic set and $F \subset \R$ an unbounded closed semialgebraic set. 
Indeed, the general idea to prove \cite[Theorem 7]{EscPer} is 
the same as in 
\cite[Theorem 3]{Pow}, which is to 
consider the variable $Y$ 
as a parameter
and to obtain for each specialization of $Y = y \in F$ a certificate of the non-negativity of $f(\bar X, y)$ on $S$, in a uniform way such that all these certificates
can be glued together to obtain the desired representation for $f(\bar X, Y)$. 

Both in \cite[Theorem 3]{Pow} (in the case $F = \R$) and in 
\cite[Theorem 7]{EscPer}, in the gluing process, it is used that every univariate polynomial non-negative on 
$\R$ is a sum of squares in $\R[Y]$.
This suggests that the same approach can also be used in every 
other possible situation where the equality between the 
cone of non-negative polynomials and the cone of sums of squares holds, 
which are known to be the case of 
bivariate polynomials of degree $4$ 
and the case of 
multivariate polynomials of degree $2$ 
(see for instance 
\cite[Chapter 6]{BCR}). 
Indeed, if separate sets of variables are considered, then the equality 
also holds in the particular case of polynomials in two sets of variables, 
the first set consisting in a single variable, and the degree of the polynomial with respect to the second set of variables equal to $2$
(see for instance \cite{Djo} or \cite[Section 7]{CLR}).

We extend the notation introduced before. 
For $r \in \N$,   we
note $\bar Y = (Y_1, \dots, Y_r)$ and
$$
C^r = \{(\bar y, z) \in \R^{r+1} \ | \ y_1^2 + \dots +
y_r^2 + z^2 = 1\}. 
$$

Also, for even $m \in \N$ and 
$$
f= \sum_{\substack{\beta \in \N_0^r \\|\beta|\le m }}  f_{\beta}(\bar X) \bar Y^{\beta} \in \R[\bar{X},\bar Y]
$$
with $\deg_{\bar Y} f= m$, we note 
$$
\bar f = \sum_{\substack{\beta \in \N_0^r \\|\beta|\le m }}  f_{\beta}(\bar X) \bar Y^{\beta}Z^{m - |\beta|} \in \R[\bar{X},\bar Y, Z]
$$ 
its homogeneization only with respect to the variables $\bar Y$.
For such an $f$, we say that $f$ satisfies the condition $(\dag)$ on $S$ if 
for every $\bar x \in S$, 
$$
\sum_{\substack{\beta \in \N_0^r \\|\beta| = m }}  f_{\beta}(\bar x) \bar Y^{\beta}
\hbox{ is a positive definite } m\hbox{-form in } \R^{r}.
$$
Note that if $f$ is positive in $S \times \R^r$ and satisfies condition $(\dag)$ on $S$, 
then $\bar f$ is positive on $S \times C^r$.

We extend the definition of the norm $\| \cdot \|_{\bullet}$ 
to $\R[\bar X, \bar Y]$ as follows.  
$$
\hbox{For }\, f= \sum_{\substack{\beta \in \N_{0}^r\\|\beta| \le m}} \, \sum_{\substack{\alpha \in \N_{0}^n\\|\alpha| \le d}} 
\binom{|\alpha|}{\alpha} a_{\alpha,\beta} \bar{X}^\alpha {\bar Y}^{\beta}, 
\quad \quad 
\| f \|_{\bullet}= \max\{|a_{\alpha,\beta}| \, | \,  
\beta \in \N_0^r, 
|\beta| \le m,
\alpha \in \N_0^n, 
|\alpha| \le d\}.
 $$
Finally, for $g_1, \dots, g_s \in \R[\bar X]$, we note
$$
M_{\R[\bar X, \bar Y]}(g_1, \dots, g_s) = \left\{ \sigma_0 + \sigma_1g_1 + \dots + \sigma_sg_s \, | \, 
\sigma_0, \sigma_1, \dots, \sigma_s \in \sum \R[\bar X, \bar Y]^2\right\} \subset \R[\bar X, \bar Y]$$
the quadratic module generated by $g_1, \dots, g_s$ in $\R[\bar X, \bar Y]$. 

Both the norm $\| \cdot \|_\bullet$ and the quadratic module $M_{\R[\bar X, \bar Y]}$ will be used later on in this paper with many different 
vectors of variables $\bar Y$, but always distinguishing the 
same vector of variables $\bar X$. 

In this setting, Theorem \ref{th:main_paper_anterior} (\cite[Theorem 7]{EscPer}) is the extension of Putinar's Positivstellensatz corresponding to the 
case $r = 1$. 
We present
below the two new extensions corresponding to the cases $r = 2, m = 4$ and $m = 2$.

\begin{theorem}\label{th:2_variables_grado_4}
Let
$g_1, \dots, g_s \in \R[\bar X]$ 
such that 
$$
\emptyset \ne S  = \{\bar x \in \R^{n} \ | \ g_1(\bar x) \ge 0, \dots, g_s(\bar x) \ge 0\} \subset (-1, 1)^n
$$
and the quadratic module $M(g_1, \dots, g_s) \subset \R[\bar X]$ is archimedean.
There exists a positive constant $c$ such that for every $f \in \R[\bar X, Y_1,Y_2]$ positive on $S \times \R^2$, 
if $\deg_{\bar X} f = d$, $\deg_{ (Y_1, Y_2)}f=4$, 
$f$ satisfies condition $(\dag)$ on $S$ and
$$
\min \left\{
\bar f(\bar x, y_1, y_2, z) \, | \, \bar x \in S, (y_1, y_2, z) \in 
C^2
\right\} = f^{\bullet} > 0,
$$
then $f$ can be written as
$$
f= \sigma_0+\sigma_1g_1+\dots+\sigma_sg_s \in M_{\R[\bar X, Y_1, Y_2]}(g_1, \dots, g_s)
$$
with $\sigma_0,\sigma_1, \dots, \sigma_s \in \sum \R[\bar X, Y_1, Y_2]^2$ and
$$
\deg(\sigma_0),\deg(\sigma_1g_1), \dots,\deg( \sigma_sg_s) \leq 
c
{\rm e}^{\left( \frac{\|f \|_{\bullet}d^2(3n)^d}{f^{\bullet}} \right)^{c} }.
$$
\end{theorem}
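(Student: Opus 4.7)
The plan is to adapt the strategy of Theorem \ref{th:main_paper_anterior}: homogenize in the $\bar Y = (Y_1, Y_2)$ variables so that the unbounded cylinder $S \times \R^2$ is replaced by the compact set $S \times C^2$; produce a sum of squares certificate for the homogenization $\bar f$ whose terms depend polynomially on $\bar X$; and finally absorb the $\bar X$-dependence via the degree bounded Putinar's Positivstellensatz (Theorem \ref{thm:NieSchw}). The role played in the $r=1$ case by the fact that every non-negative univariate polynomial is a sum of squares is now played by Hilbert's classical theorem that every positive semi-definite ternary quartic form is a sum of three squares of ternary quadratic forms.

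Condition $(\dag)$ together with positivity of $f$ on $S \times \R^2$ imply that $\bar f \ge f^{\bullet}$ on the compact set $S \times C^2$, and by homogeneity of degree $4$ in $(Y_1, Y_2, Z)$ this upgrades to $\bar f(\bar x, y_1, y_2, z) \ge f^{\bullet}(y_1^2 + y_2^2 + z^2)^2$ on $S \times \R^3$. For each fixed $\bar x \in S$, the ternary quartic form $\bar f(\bar x, Y_1, Y_2, Z)$ is thus positive definite, and by Hilbert's theorem admits a Gram matrix representation
\[
\bar f(\bar x, Y_1, Y_2, Z) = v^{\top} Q(\bar x)\, v, \qquad v = (Y_1^2, Y_1 Y_2, Y_1 Z, Y_2^2, Y_2 Z, Z^2)^{\top},
\]
where $Q(\bar x) \in \R^{6 \times 6}$ is symmetric and positive semi-definite.

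The central step is the uniformization in $\bar X$: to produce a polynomial symmetric matrix $\tilde Q(\bar X) \in \R[\bar X]^{6 \times 6}$ and a scalar polynomial $p(\bar X) \in \R[\bar X]$ strictly positive on $S$, with controlled degree and $\|\cdot\|$-norm, such that $v^{\top} \tilde Q(\bar X)\, v = p(\bar X) \cdot \bar f(\bar X, Y_1, Y_2, Z)$ and $\tilde Q(\bar X)$ is positive semi-definite on $S$ with spectral margin of order $f^{\bullet}$. I would obtain this by applying a semi-algebraic continuous selection to the SDP feasibility multifunction $\bar x \mapsto \{Q \succeq 0 : v^{\top} Q v = \bar f(\bar x, \cdot)\}$, followed by a quantitative Weierstrass approximation on $S \subset (-1,1)^n$, with the approximation error reabsorbed into $p(\bar X)$. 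Once $\tilde Q(\bar X)$ is in hand, a matrix version of Theorem \ref{thm:NieSchw} (or the scalar version applied entry by entry after a Cholesky-type reduction modulo $M(g_1, \dots, g_s)$), together with a representation of the reciprocal $1/p(\bar X)$ in $M(g_1, \dots, g_s)$, realises each entry of $\tilde Q(\bar X)$ in the quadratic module. Setting $Z = 1$ and expanding $v^{\top} \tilde Q(\bar X) v$ then yields the required decomposition $f = \sigma_0 + \sigma_1 g_1 + \dots + \sigma_s g_s \in M_{\R[\bar X, Y_1, Y_2]}(g_1, \dots, g_s)$ with $\sigma_i$ sums of squares in $\R[\bar X, Y_1, Y_2]$, where the homogeneous-in-$(Y_1, Y_2, Z)$ structure of $v$ guarantees that the sum-of-squares property is preserved under the substitution $Z=1$.

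The main obstacle will be the quantitative uniformization step. In the $r=1$ case of Theorem \ref{th:main_paper_anterior}, the pointwise SOS decomposition of a positive binary form is essentially explicit in terms of its complex roots, so the polynomial dependence on $\bar X$ can be tracked directly. Hilbert's theorem for ternary quartics, by contrast, is a pure existence statement with no comparable canonical parametrization, and the polynomial dependence of $\tilde Q(\bar X)$ and the degree/norm bounds on $p(\bar X)$ must instead be extracted by compactness and a careful tracking of constants through the selection and approximation arguments. Combining the resulting bounds with the double exponential bound of Theorem \ref{thm:NieSchw} should then reproduce the stated bound $c\, {\rm e}^{\left( \|f\|_{\bullet} d^2 (3n)^d / f^{\bullet} \right)^{c}}$.
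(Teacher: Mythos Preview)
Your strategy diverges from the paper's and carries a real gap at the uniformization step. The identity $v^{\top}\tilde Q(\bar X)\,v = p(\bar X)\,\bar f$ you aim for is not what Weierstrass approximation of a continuous Gram selection produces: approximation yields $v^{\top}\tilde Q\,v \approx \bar f$ with an unstructured polynomial error, not an exact equality up to a scalar factor. More seriously, even granting such an identity, the phrase ``representation of the reciprocal $1/p(\bar X)$ in $M(g_1,\dots,g_s)$'' is ill-posed: $1/p$ is not a polynomial, and a quadratic module has no mechanism for cancelling a strictly positive factor. From $p\cdot f \in M_{\R[\bar X,Y_1,Y_2]}(g_1,\dots,g_s)$ you cannot infer $f \in M_{\R[\bar X,Y_1,Y_2]}(g_1,\dots,g_s)$. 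Your own observation that Hilbert's theorem for ternary quartics lacks a canonical parametrization is precisely the obstruction, and it is not dissolved by compactness plus approximation; nor would the resulting constants, obtained through an unspecified selection theorem, be trackable to the stated bound.

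The paper avoids any parametric use of Hilbert's theorem. After a linear change of variables reducing to $S\subset\widetilde{\Delta}_n^{\circ}$ (Proposition~\ref{prop:main}), it subtracts a correction $\lambda(Y_1^2+Y_2^2+Z^2)^2\sum_i g_i(g_i-1)^{2k}$ from $\bar f$, choosing $\lambda,k$ via {\L}ojasiewicz-type estimates so that the resulting $h$ satisfies $h\ge\tfrac12 f^{\bullet}$ on all of $\widetilde{\Delta}_n\times C^2$, not just on $S\times C^2$. It then homogenizes $h$ in $(X_0,\bar X)$ and applies the effective P\'olya theorem of Powers--Reznick: $(X_0+\dots+X_n)^N H=\sum_{|\alpha|=N+\ell} b_\alpha(Y_1,Y_2,Z)\,X_0^{\alpha_0}\bar X^{\bar\alpha}$, where each $b_\alpha$ is a \emph{constant-coefficient} ternary quartic positive on $C^2$. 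Hilbert's theorem is invoked only on these finitely many fixed forms, so no uniformity in $\bar x$ is needed. After substituting $X_0=1-\sum X_i$ and $Z=1$, the $\bar X$-parts $(1-\sum X_i)^{\alpha_0}\bar X^{\bar\alpha}$ are squares times one of the $2^{n+1}$ polynomials $(1-\sum X_i)^{v_0}\bar X^{\bar v}$ with $v\in\{0,1\}^{n+1}$, each of which lies in $M(g_1,\dots,g_s)$ by the qualitative Putinar theorem (finitely many instances, absorbed into $c$). The subtracted correction is already manifestly in the module. This decoupling of the $\bar X$ and $(Y_1,Y_2,Z)$ variables via P\'olya is the missing idea in your proposal.
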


\begin{theorem}\label{th:r_variables_grado_2}
Let
$g_1, \dots, g_s \in \R[\bar X]$ 
such that 
$$
\emptyset \ne S  = \{\bar x \in \R^{n} \ | \ g_1(\bar x) \ge 0, \dots, g_s(\bar x) \ge 0\} \subset (-1, 1)^n
$$
and the quadratic module $M(g_1, \dots, g_s) \subset \R[\bar X]$ is archimedean.
There exists a positive constant $c$ such that for every $f \in \R[\bar X, \bar Y]$ positive on $S \times \R^r$, 
if $\deg_{\bar X} f= d$, 
$\deg_{\bar Y}f=2$,
$f$ satisfies condition $(\dag)$ on $S$ 
and
$$
\min \left\{
\bar f(\bar x, \bar y, z) \, | \, \bar x \in S, (\bar y, z) \in 
C^r
\right\} = f^{\bullet} > 0,
$$
then $f$ can be written as
$$
f= \sigma_0+\sigma_1g_1+\dots+\sigma_sg_s \in M_{\R[\bar X, \bar Y]}(g_1, \dots, g_s)
$$
with $\sigma_0,\sigma_1, \dots, \sigma_s \in \sum \R[\bar X, \bar Y]^2$ and
$$
\deg(\sigma_0),\deg(\sigma_1g_1), \dots,\deg( \sigma_sg_s) \leq 
cr^2
{\rm e}^{ \left( \frac{ \| f \|_{\bullet}r^2d^2(3n)^d}{{f^{\bullet}}} \right)^{c} }.
$$
\end{theorem}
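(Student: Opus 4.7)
We adapt the scheme of Theorem \ref{th:main_paper_anterior} and of Powers \cite{Pow}: view $\bar Y$ as a parameter, produce Putinar certificates for the specializations $f(\bar X, \bar y)$ on $S$ which are sufficiently uniform in $\bar y \in \R^r$, and assemble them into a single representation in $M_{\R[\bar X, \bar Y]}(g_1, \dots, g_s)$. In accordance with the discussion preceding the statement, the gluing step relies on Hilbert's theorem that every non-negative polynomial of degree at most $2$ in $r$ variables is a sum of squares of polynomials of degree at most $1$.

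Since $\deg_{\bar Y} f = 2$, the natural starting point is the matrix formulation $f(\bar X, \bar Y) = v^T \tilde A(\bar X) v$ with $v = (1, Y_1, \dots, Y_r)^T$ and $\tilde A(\bar X) \in \R[\bar X]^{(r+1)\times(r+1)}$ symmetric. Condition $(\dag)$ together with $f^{\bullet} > 0$ is equivalent to $\tilde A(\bar x) \succ 0$ for every $\bar x \in S$, with smallest eigenvalue bounded below by a quantity of order $f^{\bullet}$. In particular, the $r+1$ leading principal minors $\Delta_1(\bar X), \dots, \Delta_{r+1}(\bar X)$ of $\tilde A$ are positive on $S$, and one can derive explicit lower bounds for $\min_S \Delta_k$ and upper bounds for $\| \Delta_k \|$ in terms of $f^{\bullet}$, $\| f \|_{\bullet}$ and $r$. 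Applying Theorem \ref{thm:NieSchw} to each of them gives Putinar representations $\Delta_k \in M(g_1, \dots, g_s)$ with quantitative degree bounds.

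The $LDL^T$-decomposition of $\tilde A$ over the field of fractions $\R(\bar X)$ yields polynomials $\Phi_1, \dots, \Phi_{r+1} \in \R[\bar X, \bar Y]$, each of degree at most $1$ in $\bar Y$, such that, after clearing denominators, the polynomial identity
\[
\Big(\prod_{k=1}^{r+1} \Delta_{k-1}\Delta_k\Big) f(\bar X, \bar Y)
= \sum_{k=1}^{r+1}\Big(\prod_{j \ne k} \Delta_{j-1}\Delta_j \Big)\Phi_k(\bar X, \bar Y)^2
\quad (\Delta_0 := 1)
\]
holds in $\R[\bar X, \bar Y]$. Each coefficient on the right-hand side is a product of $\Delta_j$'s, with some squared and at most three appearing to odd multiplicity. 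Combining this with the Putinar representations of the $\Delta_k$, and being slightly careful because $M(g_1,\dots,g_s)$ is not closed under products (for instance by isolating the at-most-three odd-multiplicity factors while absorbing the squared ones into the sums of squares), one places $\big(\prod_k \Delta_{k-1}\Delta_k\big) \cdot f$ in $M_{\R[\bar X, \bar Y]}(g_1, \dots, g_s)$ with quantitative control of the degrees.

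The main obstacle is the final step: converting this Positivstellensatz-style identity, which carries the positive denominator $\prod_k \Delta_{k-1}\Delta_k$, into a Putinar representation of $f$ itself. Here archimedeanity of $M(g_1, \dots, g_s)$ is exploited to invert the denominator modulo $M$, in the spirit of the concluding step in the proof of Theorem \ref{th:main_paper_anterior}, and Hilbert's SOS representation of non-negative degree-$2$ forms in $\bar Y$ is what allows the residual terms to absorb into $\sum \R[\bar X, \bar Y]^2$ while keeping the $\bar Y$-degree bounded by $2$ throughout. Tracking the degree growth through the $r+1$ applications of Theorem \ref{thm:NieSchw}, the multiplicative combinations of the $\Delta_j$, and the final inversion step is what produces the factor $r^2$ in the stated bound $c r^2 \exp\big((\|f\|_{\bullet} r^2 d^2 (3n)^d / f^{\bullet})^c\big)$.
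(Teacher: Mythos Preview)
Your approach via the $LDL^T$-decomposition of the Gram matrix is fundamentally different from the paper's, and it has a genuine gap at the final ``inversion'' step. The paper does not invert any denominator: it reruns the P\'olya-based argument of Proposition~\ref{prop:main} verbatim with $m=2$ and general $r$, defining $h=\bar f-\lambda(Y_1^2+\cdots+Y_r^2+Z^2)\sum_i g_i(g_i-1)^{2k}$, homogenizing in $(X_0,\bar X)$, and applying P\'olya's theorem so that the resulting coefficients $b_\alpha(\bar Y,Z)$ are non-negative quadratic forms in $r+1$ variables and hence sums of squares. No rational functions ever appear, and the $r^2$ factors in the bound come simply from $\binom{m+r}{r}=\binom{r+2}{2}$ in Lemmas~\ref{lem:norma_div_min} and~\ref{lema_desig}. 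The ``concluding step in the proof of Theorem~\ref{th:main_paper_anterior}'' that you invoke is not a denominator inversion either: it is just the Putinar representation of the finitely many monomials $(1-\sum_j X_j)^{v_0}\bar X^{\bar v}$ with $v\in\{0,1\}^{n+1}$, exactly as in Proposition~\ref{prop:main}.

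The gap is real: from $\big(\prod_k\Delta_{k-1}\Delta_k\big)\cdot f\in M_{\R[\bar X,\bar Y]}(g_1,\dots,g_s)$ there is no mechanism, even with archimedeanity, to deduce $f\in M_{\R[\bar X,\bar Y]}(g_1,\dots,g_s)$. Quadratic modules are not closed under division by elements positive on $S$, and archimedeanity only lets you approximate $1/\prod\Delta_k$ by polynomials, which gives approximate rather than exact membership; turning that into an exact representation is essentially the content of the theorem you are trying to prove. Even if this step could be repaired, the stated bound would not follow: applying Theorem~\ref{thm:NieSchw} to the odd-multiplicity products such as $\Delta_{k-1}\Delta_k\Delta_{r+1}$, which have $\bar X$-degree of order $rd$, introduces factors of size $n^{O(rd)}$ in the exponent rather than the $(3n)^d$ required by a constant $c$ independent of $r$.
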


Finally, we introduce the notation we need to state the extension corresponding to the case of two separate sets of variables.
For $r \in \N$, we
note $\bar Y_2 = (Y_{21}, \dots, Y_{2r})$. Also, 
for even $m \in \N_0$ and 
$$
f= \sum_{0 \le i \le m} \sum_{\substack{\beta \in \N_0^r \\|\beta|\le 2 }}  f_{i,\beta}(\bar X) Y_{1}^i \bar Y_2^{\beta} \in \R[\bar{X},Y_{1}, \bar Y_2]
$$
with  $\deg_{Y_1} f = m$ and $\deg_{\bar Y_2} f= 2$, we note 
$$
\bar {\bar f} = \sum_{0 \le i \le m} 
\sum_{\substack{\beta \in \N_0^r \\|\beta|\le 2 }}  f_{i,\beta}(\bar X)  
Y_{1}^iZ_1^{m-i}\bar Y_2^{\beta}Z_2^{2 - |\beta|} \in \R[\bar{X}, Y_{1}, Z_1, \bar Y_2, Z_2]
$$ 
its bihomogeneization only with respect to the variables $Y_{1}$ and $\bar Y_2$, separately.
The additional \linebreak assumption playing the role of condition $(\dag)$ in this case is the following one. 
We say that $f$ satisfies the condition $(\ddagger)$ on $S$ if 
for every $\bar x \in S$, 
$$
\begin{array}{rl}
{\rm i}) & 
\displaystyle{\sum_{\substack{\beta \in \N_0^r \\|\beta|\le 2 }}}  f_{m,\beta}(\bar x)  \bar Y_2^{\beta} 
\hbox{ is positive in } \R^r, \\[10mm]
{\rm ii)} & \displaystyle{\sum_{\substack{\beta \in \N_0^r \\|\beta| =  2 }} } f_{m,\beta}(\bar x) \bar Y_2^{\beta} 
\hbox{ is a positive definite quadratic form in } \R^{r}, \\[10mm]
{\rm iii}) & \hbox{for every } y_1 \in \R, 
\displaystyle{\sum_{0 \le i \le m} \sum_{\substack{\beta \in \N_0^r \\|\beta| = 2 }} } f_{i,\beta}(\bar x) y_{1}^i \bar Y_2^{\beta} 
\hbox{ is a positive definite quadratic form in } \R^{r}. \\[10mm]
\end{array}
$$
The definition of $(\ddagger)$ is made in such a way that if $f$ is positive in $S \times \R^{r+1}$ and satisfies condition $(\ddagger)$ on $S$, 
then $\bar{\bar f}$ is positive on $S \times C \times C^r$.

The extension of Putinar's Positivstellensatz in this final case is the following result.

\begin{theorem}\label{th:2_juegos_de_variables}

Let
$g_1, \dots, g_s \in \R[\bar X]$ 
such that 
$$
\emptyset \ne S  = \{\bar x \in \R^{n} \ | \ g_1(\bar x) \ge 0, \dots, g_s(\bar x) \ge 0\} \subset (-1, 1)^n
$$
and the quadratic module $M(g_1, \dots, g_s) \subset \R[\bar X]$ is archimedean.
There exists a positive constant $c$ such that for every $f \in \R[\bar X, Y_{1}, \bar Y_2]$ positive on $S \times \R^{r+1}$, 
if $\deg_{\bar X} f= d$, $\deg_{Y_{1}} f = m$, 
$\deg_{\bar Y}f=2$,
$f$ satisfies condition $(\ddagger)$ on $S$ 
and
$$
\min \left\{
\bar{\bar f}(\bar x, y_{1}, z_1, \bar y_2, z_2) \, | \, \bar x \in S, (y_{1}, z_1) \in 
C, (\bar y_2, z_2) \in C^r
\right\} = f^{\bullet} > 0,
$$
then $f$ can be written as
$$
f= \sigma_0+\sigma_1g_1+\dots+\sigma_sg_s \in M_{\R[\bar X, Y_{1}, \bar Y_2]}(g_1, \dots, g_s)
$$
with $\sigma_0,\sigma_1, \dots, \sigma_s \in \sum \R[\bar X, Y_{1}, \bar Y_2]^2$ and
$$
\deg(\sigma_0),\deg(\sigma_1g_1), \dots,\deg( \sigma_sg_s) \leq 
c(m+1)2^{\frac{m}2}r^2
{\rm e}^{ \left( \frac{ \| f \|_{\bullet}(m+1)r^2d^2(3n)^d}{{f^{\bullet}}} \right)^{c} }.
$$

\end{theorem}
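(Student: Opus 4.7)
The proof combines the strategy of Theorem \ref{th:main_paper_anterior} (handling the unbounded univariate variable $Y$) with that of Theorem \ref{th:r_variables_grado_2} (handling the $r$ unbounded variables of degree $2$). Condition $(\ddagger)$ is tailored precisely so that the bihomogenization $\bar{\bar f}$ is positive on the compact set $S \times C \times C^r$, which is the key input that allows the arguments of the two previous theorems to be merged. The gluing ultimately relies on the equality between non-negative polynomials and sums of squares in the bigraded ring with one univariate block and one block of degree $2$, which is valid by \cite{Djo} and \cite[Section~7]{CLR}.

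\emph{Step 1: uniform Putinar on $S \times C \times C^r$.} For each parameter $(y_1,z_1,\bar y_2,z_2) \in C \times C^r$, the polynomial $\bar{\bar f}(\bar X, y_1, z_1, \bar y_2, z_2) \in \R[\bar X]$ is positive on $S$, with minimum at least $f^\bullet$ and $\|\cdot\|$ controlled by $\|f\|_\bullet$ up to polynomial factors in $m$ and $r$. Theorem \ref{thm:NieSchw} produces SOS certificates $\sigma_i(\bar X;\, y_1, z_1, \bar y_2, z_2)$ whose degrees are uniformly bounded by an expression of the shape appearing in the exponent of the claimed bound, with constants independent of the parameter.

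\emph{Step 2: gluing the parameterized certificates.} Assembling these certificates over the parameter space, the $\bar X$-coefficients of $\sigma_0, \sigma_1 g_1, \ldots, \sigma_s g_s$ become polynomials in $(Y_1, Z_1, \bar Y_2, Z_2)$ which are bihomogeneous of bidegree $(2k_1, 2k_2)$ with $k_1 \le m/2$, $k_2 \le 1$, and non-negative on $C \times C^r$. By the separated-variable SOS equality mentioned above, each such coefficient is an SOS of polynomials bihomogeneous of bidegree $(k_1, k_2)$. The SOS representation in the block $(Y_1, Z_1)$ introduces the factor $(m+1)2^{m/2}$, exactly as in Theorem \ref{th:main_paper_anterior}, while the representation in the block $(\bar Y_2, Z_2)$ introduces the factor $r^2$, as in Theorem \ref{th:r_variables_grado_2}. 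Setting $Z_1 = Z_2 = 1$ dehomogenizes back to $\R[\bar X, Y_1, \bar Y_2]$ without destroying the SOS structure, yielding the desired representation in $M_{\R[\bar X, Y_1, \bar Y_2]}(g_1,\ldots,g_s)$ with the stated degree bound.

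\emph{Main obstacle.} The delicate point is the joint quantitative gluing lemma for bihomogeneous non-negative polynomials on $C \times C^r$: one must not only express them as SOS of the correct bidegree but also control coefficient sizes so that the two factors $(m+1)2^{m/2}$ and $r^2$ enter multiplicatively in the final degree bound, rather than being combined in a more costly way. The separation of $Y_1$ from $\bar Y_2$ is essential here, as it lets the two one-block SOS representations be applied independently rather than being forced to compete inside a single block of $r+1$ variables (where the SOS equality would fail for $m \ge 4$). Once this quantitative gluing is in place, combining the Nie--Schweighofer bound from Step~1 with the polynomial blow-up from Step~2 produces the stated bound on $\deg(\sigma_0), \deg(\sigma_1 g_1), \ldots, \deg(\sigma_s g_s)$.
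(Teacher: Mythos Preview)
Your Step~2 hides the real difficulty. Applying Theorem~\ref{thm:NieSchw} as a black box for each fixed $(y_1,z_1,\bar y_2,z_2)\in C\times C^r$ gives you, for that one parameter value, sums of squares $\sigma_i(\bar X)$; it gives you no information whatsoever about how these $\sigma_i$ vary with the parameter. There is no mechanism by which ``assembling these certificates over the parameter space'' produces $\sigma_i$ whose $\bar X$-coefficients are \emph{polynomials} in $(Y_1,Z_1,\bar Y_2,Z_2)$. The existence statement of Theorem~\ref{thm:NieSchw} is highly non-constructive (\L{}ojasiewicz exponents, abstract Putinar representations of monomials, etc.), and nothing forces two nearby parameter values to yield compatible certificates, let alone certificates that globalize to a polynomial family. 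So the sentence ``the $\bar X$-coefficients of $\sigma_0,\sigma_1 g_1,\ldots$ become polynomials in $(Y_1,Z_1,\bar Y_2,Z_2)$'' is exactly the content of the theorem, not a step one can pass over.

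The paper does not use Theorem~\ref{thm:NieSchw} as a black box. It reopens the Nie--Schweighofer argument and carries the extra variables along symbolically: one forms the auxiliary polynomial
\[
h=\bar{\bar f}-\lambda\,(Y_1^2+Z_1^2)^{m/2}(Y_{21}^2+\cdots+Y_{2r}^2+Z_2^2)\sum_{i}g_i(g_i-1)^{2k}
\]
in $\R[\bar X,Y_1,Z_1,\bar Y_2,Z_2]$, shows $h\ge\tfrac12 f^\bullet$ on $\widetilde\Delta_n\times C\times C^r$, homogenizes in $\bar X$, and then applies P\'olya's theorem in the $\bar X$-variables only. Because P\'olya's step is the explicit multiplication by $(X_0+\cdots+X_n)^N$, the resulting coefficients $b_\alpha$ are automatically bihomogeneous polynomials in $(Y_1,Z_1)$ and $(\bar Y_2,Z_2)$; their positivity on $C\times C^r$ then lets one invoke the separated-variable SOS equality, and dehomogenization finishes the proof. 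Note also that the factor $2^{m/2}$ does not come from the SOS step in the $(Y_1,Z_1)$ block as you suggest: it enters through the bound on $\|h\|_\bullet$, via the binomial coefficients in $(Y_1^2+Z_1^2)^{m/2}$.
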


\section{Proof of the main results}

The proofs of Theorems  \ref{th:2_variables_grado_4}, \ref{th:r_variables_grado_2} and \ref{th:2_juegos_de_variables} follow the same path than the
proof of Theorem \ref{th:main_paper_anterior}
(\cite[Theorem 7]{EscPer}), which is itself mainly a combination and reorganization of 
techniques from 
\cite{NieSchw}, \cite{Pow} and \cite{Schw}.

For $n \in \N$, we denote by $\widetilde{\Delta}_n$ the simplex 
$$
\widetilde{\Delta}_n= \Big\{ \bar x \in \R^n \ | \  \displaystyle \sum_{1 \le i \le n} x_i \leq 1 \hbox{ and } x_i \geq 0 
\hbox{ for } 1 \le i \le n \Big\}
$$
and 
by ${\Delta}_n$ the standard simplex 
$$
{\Delta}_n= \Big\{ (x_0, \bar x) \in \R^{n+1} \ | \  \displaystyle \sum_{0 \le i \le n} x_i = 1 \hbox{ and } x_i \geq 0 
\hbox{ for } 0 \le i \le n \Big\}.
$$


The following two lemmas are slight variations of 
\cite[Lemma 16]{EscPer} and 
\cite[Lemma 11]{NieSchw} (see also \cite[Lemma 17]{EscPer}).

\begin{lemma}\label{lem:norma_div_min}
Let $f \in \R[\bar{X},\bar Y]$ such that $\deg_{\bar{X}}f=d$ and 
$\deg_{\bar Y} f=m$. 
For every $\bar x \in \widetilde \Delta_n$ and $(\bar y, z) \in C^r$,
$$
|\bar f(\bar x, \bar y, z)| \le \| f\|_{\bullet} \binom{m+r}{r}(d+1).
$$
\end{lemma}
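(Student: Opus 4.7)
The plan is to factor the estimate for $|\bar f(\bar x, \bar y, z)|$ into an $\bar X$-part controlled by the multinomial identity and the simplex constraint, times a $(\bar Y,Z)$-part controlled by the fact that $C^r$ is the unit sphere. First I would write
$$
\bar f(\bar x, \bar y, z) = \sum_{|\beta| \le m} f_\beta(\bar x)\,\bar y^\beta z^{m-|\beta|},
\qquad
f_\beta(\bar x) = \sum_{|\alpha|\le d} \binom{|\alpha|}{\alpha} a_{\alpha,\beta}\,\bar x^{\alpha},
$$
and apply the triangle inequality, so it suffices to bound $|f_\beta(\bar x)|$ uniformly on $\widetilde\Delta_n$ and then to count the resulting terms coming from $\beta$.

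For a fixed $\beta$, since $x_i \ge 0$ on $\widetilde\Delta_n$ and $|a_{\alpha,\beta}| \le \|f\|_\bullet$, I would bound
$$
|f_\beta(\bar x)| \le \|f\|_\bullet \sum_{|\alpha| \le d} \binom{|\alpha|}{\alpha} \bar x^{\alpha}
= \|f\|_\bullet \sum_{k=0}^{d} (x_1 + \dots + x_n)^k,
$$
the last equality being the multinomial theorem applied degree by degree. Using $\sum_i x_i \le 1$ on $\widetilde\Delta_n$, each summand on the right is at most $1$, so $|f_\beta(\bar x)| \le \|f\|_\bullet (d+1)$. This is precisely the point where the normalization of $\|\cdot\|_\bullet$ by multinomial coefficients pays off.

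For the $(\bar Y, Z)$-part, the identity $y_1^2 + \dots + y_r^2 + z^2 = 1$ on $C^r$ forces $|y_i|, |z| \le 1$, hence $|\bar y^\beta| |z|^{m-|\beta|} \le 1$ for every $\beta$. The number of multi-indices $\beta \in \N_0^r$ with $|\beta| \le m$ equals $\binom{m+r}{r}$, so summing gives the claimed inequality. There is no substantive obstacle: the argument is a direct adaptation of \cite[Lemma 16]{EscPer} from one $Y$-variable to the $r$-variable vector $\bar Y$, with the only new ingredient being the replacement of the factor $m+1$ by the count $\binom{m+r}{r}$ of monomials in $\bar Y, Z$ of total degree at most $m$ in $\bar Y$.
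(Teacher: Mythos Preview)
Your argument is correct and is exactly the ``slight variation of \cite[Lemma 16]{EscPer}'' that the paper invokes without spelling out: bound each $|f_\beta(\bar x)|$ by $\|f\|_\bullet(d+1)$ via the multinomial identity on $\widetilde\Delta_n$, use $|\bar y^\beta z^{m-|\beta|}|\le 1$ on $C^r$, and count the $\binom{m+r}{r}$ monomials in $\bar Y$. There is nothing to add.
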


\begin{lemma}\label{lema_desig}
Let $f \in \R[\bar{X},\bar Y]$ such that $\deg_{\bar{X}}f=d$ and 
$\deg_{\bar Y} f=m$. For every 
$\bar{x}_1,\bar{x}_2 \in \widetilde{\Delta}_n$ and $(\bar y,z) \in C^r$, 
 $$
 |\bar f(\bar{x}_1,\bar y,z)-\bar f(\bar{x}_2,\bar y,z)| 
 \leq \frac{1}{2} \sqrt{n} \| f \|_{\bullet}\binom{m+r}{r}d(d+1) \| \bar{x}_1-\bar{x}_2\|.
 $$
\end{lemma}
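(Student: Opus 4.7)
The approach mimics the Nie--Schweighofer style Lipschitz estimate (see \cite[Lemma 11]{NieSchw} and \cite[Lemma 17]{EscPer}). Writing $f = \sum_{\alpha,\beta} \binom{|\alpha|}{\alpha} a_{\alpha,\beta}\, \bar X^\alpha \bar Y^\beta$ and passing to the $\bar Y$-homogenization yields
$$
\bar f(\bar x_1, \bar y, z) - \bar f(\bar x_2, \bar y, z) = \sum_{\substack{|\alpha|\le d \\ |\beta|\le m}} \binom{|\alpha|}{\alpha} a_{\alpha,\beta} \,(\bar x_1^\alpha - \bar x_2^\alpha)\, \bar y^\beta z^{m-|\beta|}.
$$
On $C^r$ each $|y_i|\le 1$ and $|z|\le 1$, so $|\bar y^\beta z^{m-|\beta|}|\le 1$; the number of admissible $\beta$ is $\binom{m+r}{r}$, and $|a_{\alpha,\beta}|\le \|f\|_{\bullet}$. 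Taking absolute values and pulling these bounds out reduces the claim to showing
$$
A := \sum_{|\alpha|\le d} \binom{|\alpha|}{\alpha}\, |\bar x_1^\alpha - \bar x_2^\alpha| \le \tfrac{1}{2}\sqrt{n}\, d(d+1)\, \|\bar x_1 - \bar x_2\|.
$$

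The main step is to bound $A$. Fix $k \in \{0,\ldots,d\}$ and parametrize the segment $\bar x(t) = (1-t)\bar x_1 + t \bar x_2$, which stays in $\widetilde{\Delta}_n$ by convexity. The mean value theorem applied to $t \mapsto \bar x(t)^\alpha$ gives
$$
|\bar x_1^\alpha - \bar x_2^\alpha| \le \max_{t\in[0,1]} \sum_{i:\,\alpha_i\ge 1} \alpha_i\, |x_{2,i}-x_{1,i}|\, \bar x(t)^{\alpha-e_i}.
$$
Weighting by $\binom{k}{\alpha}$ and swapping the order of summation, the identity $\binom{k}{\alpha}\alpha_i = k\binom{k-1}{\alpha-e_i}$ collapses the inner sum into $k\bigl(\sum_j x(t)_j\bigr)^{k-1} \le k$, since $\bar x(t)\in\widetilde{\Delta}_n$. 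Combined with $\|v\|_1 \le \sqrt{n}\,\|v\|_2$, this yields
$$
\sum_{|\alpha|=k} \binom{k}{\alpha}\, |\bar x_1^\alpha - \bar x_2^\alpha| \le k\,\|\bar x_2-\bar x_1\|_1 \le k\sqrt{n}\,\|\bar x_1 - \bar x_2\|,
$$
and summing $k = 0,1,\dots,d$ produces the desired bound on $A$. Multiplying by $\|f\|_{\bullet}\binom{m+r}{r}$ yields the statement.

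No real obstacle is anticipated: the proof is essentially a controlled rearrangement of absolute values, and the only subtle ingredient is the combinatorial identity that makes the weighted sum of $\alpha_i \bar x(t)^{\alpha-e_i}$ collapse back into $\bigl(\sum_j x_j\bigr)^{k-1}$, which is exactly what the multinomial normalization built into $\|\cdot\|_{\bullet}$ is designed for. The factor $d(d+1)/2$ in the statement arises simply from the arithmetic series $\sum_{k=0}^d k$, while the factor $\binom{m+r}{r}$ comes from counting multi-indices $\beta \in \N_0^r$ with $|\beta|\le m$.
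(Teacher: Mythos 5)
Your proof is correct and is essentially the same argument that underlies the paper's lemma (which the paper only cites as a slight variation of \cite[Lemma 11]{NieSchw} and \cite[Lemma 17]{EscPer}): bound $|\bar y^\beta z^{m-|\beta|}|\le 1$ on $C^r$ and count the $\binom{m+r}{r}$ indices $\beta$, then control $\sum_{|\alpha|=k}\binom{k}{\alpha}|\bar x_1^\alpha-\bar x_2^\alpha|$ along the segment in $\widetilde\Delta_n$ using $\binom{k}{\alpha}\alpha_i=k\binom{k-1}{\alpha-e_i}$, $(x_1+\dots+x_n)^{k-1}\le 1$, and $\|\cdot\|_1\le\sqrt n\,\|\cdot\|_2$, with $\sum_{k\le d}k=\tfrac12 d(d+1)$ giving exactly the stated constant. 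One cosmetic repair: since you take $\max_{t}$ separately for each $\alpha$, the collapse of the $\alpha$-sum at a single $t$ is not literally licensed as written; use instead the integral form $|\bar x_1^\alpha-\bar x_2^\alpha|\le\int_0^1\sum_i\alpha_i|x_{2,i}-x_{1,i}|\,\bar x(t)^{\alpha-e_i}\,dt$ (or one sign-adjusted auxiliary function of $t$), after which the interchange and the constants go through verbatim.
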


We focus on the proof of Theorem \ref{th:2_variables_grado_4}.
Under the 
stronger assumption of $S 
\subset {\widetilde \Delta}_n^{\circ}$, 
Proposition \ref{prop:main} below is just a slight variant of Theorem \ref{th:2_variables_grado_4} with a 
substantially better degree bound (which unfortunately does not have good rescaling properties). 
Once Proposition \ref{prop:main} is proved, 
Theorem \ref{th:2_variables_grado_4} simply follows by composing with a linear 
change of variables.

\begin{proposition}\label{prop:main}
Let $g_1, \dots, g_s \in \R[\bar{X}]$ such that 
$$
\emptyset \ne S = \{\bar x \in \R^{n} \ | \ g_1(\bar x) \ge 0, \dots, g_s(\bar x) \ge 0\} \subset {\widetilde{\Delta}_n}^{\circ}
$$
and the quadratic module $M(g_1, \dots, g_s)$ is archimedean. 
There exists a positive constant $c$ such that for every $f \in \R[\bar X, Y_1, Y_2]$ positive on $S \times \R^2$, 
if $\deg_{\bar X} f = d$, $\deg_{(Y_1, Y_2)} f = 4$, 
$f$ satisfies condition $(\dag)$ on $S$ and 
$$
\min\{\bar f(\bar x, y_1, y_2, z) \,  | \, \bar x \in S, (y_1, y_2, z) \in C^2 \} = f^{\bullet} >0, 
$$ 
then $f$ can be written as
$$
f = \sigma_0 + \sigma_1g_1 + \dots + \sigma_sg_s \in M_{\R[\bar X, Y_1, Y_2]}(g_1, \dots, g_s)
$$
with $\sigma_0, \sigma_1,  \dots, \sigma_s \in \sum \R[\bar X, Y_1, Y_2]^2$ and 
$$
\deg(\sigma_0), \deg(\sigma_1g_1),  \dots, \deg(\sigma_sg_s)
\le 
c {\rm e}^{\left( \frac{\|  f \|_{\bullet}d^2}{f^{\bullet}} \right)^{c}}.
$$
\end{proposition}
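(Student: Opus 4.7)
The plan is to follow closely the path used in \cite[Theorem 7]{EscPer} for Theorem \ref{th:main_paper_anterior}, itself a reorganization of ideas from \cite{NieSchw}, \cite{Pow} and \cite{Schw}. The decisive new ingredient, replacing the elementary fact that a non-negative univariate polynomial is a sum of squares used in the $r=1$ case, is Hilbert's classical theorem: a non-negative ternary quartic form is a sum of three squares of ternary quadratic forms.

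First I would pass to the homogenization $\bar f(\bar X, Y_1, Y_2, Z) \in \R[\bar X, Y_1, Y_2, Z]$, which is homogeneous of degree $4$ in $(Y_1, Y_2, Z)$. By condition $(\dag)$ and the positivity of $f$ on $S \times \R^2$, $\bar f$ is strictly positive on the compact set $S \times C^2$ with minimum $f^{\bullet}$. Lemma \ref{lem:norma_div_min} controls the sup-norm of $\bar f$ on ${\widetilde \Delta}_n \times C^2$ in terms of $\| f \|_{\bullet}$, while Lemma \ref{lema_desig} provides the modulus of continuity in $\bar X$ needed to treat the family of pointwise certificates uniformly. For each fixed $(y_1, y_2, z) \in C^2$, an application of Theorem \ref{thm:NieSchw} to $\bar f(\bar X, y_1, y_2, z) \in \R[\bar X]$ on $S \subset {\widetilde \Delta}_n^{\circ}$ yields a Putinar representation with degrees controlled by a bound of the desired form $c\, {\rm e}^{(\| f \|_{\bullet} d^2 / f^{\bullet})^{c}}$.

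The bulk of the work is then to glue these pointwise certificates into a single representation of $\bar f$ in the quadratic module of $g_1, \dots, g_s$ inside $\R[\bar X, Y_1, Y_2, Z]$. Following the blueprint of \cite[Theorem 7]{EscPer}, this is carried out through a Polya-type multiplication on the standard simplex (this is the reason for the hypothesis $S \subset {\widetilde \Delta}_n^{\circ}$ and for the absence of the $(3n)^d$ factor that is instead present in Theorem \ref{th:2_variables_grado_4}), tracking the coefficients carefully so as to obtain
$$
\bar f = \sigma_0 + \sigma_1 g_1 + \dots + \sigma_s g_s
$$
with each $\sigma_i \in \R[\bar X, Y_1, Y_2, Z]$ homogeneous of degree $4$ in $(Y_1, Y_2, Z)$, non-negative on $S \times C^2$, and of degree in $\bar X$ within the required exponential bound.

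The decisive and most delicate step, which I expect to be the main obstacle, is to certify each $\sigma_i$ as a sum of squares in $\R[\bar X, Y_1, Y_2, Z]$. For each fixed $\bar x \in S$, the form $\sigma_i(\bar x, Y_1, Y_2, Z)$ is a non-negative ternary quartic form and hence, by Hilbert's theorem, a sum of three squares of ternary quadratic forms; the task is to upgrade this pointwise existential statement into a \emph{parametric} SOS decomposition whose quadratic factors depend polynomially on $\bar X$, with degrees and norms compatible with the final exponential bound. This parametric sum-of-squares step is exactly where the new technical effort beyond the $r = 1$ argument of \cite{EscPer} has to be concentrated. Once such a parametric decomposition is in hand, dehomogenization at $Z = 1$ converts the identity above into the desired representation of $f$ inside $M_{\R[\bar X, Y_1, Y_2]}(g_1, \dots, g_s)$ and completes the proof.
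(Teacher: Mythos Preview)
Your outline tracks the general shape of the argument but misidentifies the decisive step and, as written, runs into a real obstacle. You propose to arrive at a representation $\bar f = \sigma_0 + \sigma_1 g_1 + \dots + \sigma_s g_s$ in which each $\sigma_i(\bar X, Y_1, Y_2, Z)$ is a ternary quartic in $(Y_1, Y_2, Z)$ with coefficients depending on $\bar X$, and then to find a \emph{parametric} Hilbert decomposition, i.e.\ sums of squares of quadratics varying polynomially in $\bar X$. There is no available mechanism for this: Hilbert's theorem is purely existential for a single form, and a family of non-negative ternary quartics over a base need not admit an SOS decomposition with polynomial dependence on the base parameters. If you actually tried to carry out this step you would be stuck.

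The paper avoids the issue entirely, and in particular does \emph{not} apply Theorem~\ref{thm:NieSchw} pointwise in $(y_1, y_2, z)$. One first subtracts a penalty term,
\[
h \;=\; \bar f \;-\; \lambda\,(Y_1^2+Y_2^2+Z^2)^2\sum_{1\le i\le s} g_i\,(g_i-1)^{2k},
\]
with $\lambda$ and $k$ chosen via the {\L}ojasiewicz-type argument underlying \cite{NieSchw}, so that $h \ge \tfrac12 f^{\bullet}$ on the \emph{whole} of $\widetilde\Delta_n \times C^2$, not only on $S \times C^2$. After homogenizing in $\bar X$ and applying P\'olya's theorem with the Powers--Reznick bound, one obtains
\[
H(X_0,\bar X, Y_1, Y_2, Z)\,(X_0+\dots+X_n)^N \;=\; \sum_{|\alpha|=N+\ell} b_\alpha(Y_1, Y_2, Z)\, X_0^{\alpha_0}\bar X^{\bar\alpha},
\]
and the whole point is that each coefficient $b_\alpha$ is a ternary quartic \emph{with no $\bar X$-dependence whatsoever}, positive on $C^2$ and hence (after setting $Z=1$) a sum of squares in $\R[Y_1,Y_2]$ by Hilbert's theorem applied to finitely many constant forms. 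The $\bar X$-part of each term is merely a monomial $(1-X_1-\dots-X_n)^{\alpha_0}\bar X^{\bar\alpha}$; after peeling off a square factor one is left with one of the $2^{n+1}$ products $(1-\sum_j X_j)^{v_0}\bar X^{\bar v}$, $v\in\{0,1\}^{n+1}$, each of which lies in $M(g_1,\dots,g_s)$ by classical Putinar because $S \subset \widetilde\Delta_n^{\circ}$. Thus the $\bar X$- and $(Y_1,Y_2,Z)$-contributions decouple completely, no parametric SOS step is needed, and dehomogenization at $X_0 = 1-\sum_j X_j$, $Z=1$ yields the desired representation of $f$ with the stated degree bound.
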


\begin{proof}{Proof:}
Without loss of generality we suppose $\deg g_i \ge 1$ and  $|g_i|\leq 1$ in $\widetilde \Delta_n$ for $1 \le i \le s$. 

If $d = 0$ then $f \in \R[Y_1, Y_2]$ is positive on $\R^2$, and since $\deg_{(Y_1, Y_2)} f = 4$,  $f \in \sum \R[Y_1, Y_2]^2$ and any constant  
$c \ge 4$ works.
So from now we suppose $d \ge 1$ and in the case that the final constant $c$ we find turns out to be less than $4$, we just
replace it by the result of applying \cite[Lemma 18]{EscPer} to 
the $6$-uple $(4, 0, c, 0, 1, c)$. 

We prove first that there exist 
$\lambda \in \R_{>0}$ and $k \in \N_0$ 
such that
$$
h= \bar{f}- \lambda\big( Y_1^2 + Y_2^2 + Z^2 \big)^2 
\sum_{1 \leq i \leq s} g_i \cdot (g_i-1)^{2k}  \in \R[\bar{X},Y_1, Y_2,Z]
$$
satisfies $h \ge \frac12 f^{\bullet}$ in $\widetilde{\Delta}_n \times 
C^2$.

For each $(y_1, y_2,z) \in C^2$ we consider 
$$
A_{y_1, y_2,z}=\left\{\bar{x} \in \widetilde{\Delta}_n \ | \ \bar{f}(\bar x, y_1, y_2, z) \leq \frac{3}{4}f^{\bullet}\right\}.
$$
Note that $A_{y_1, y_2, z} \cap S = \emptyset$. 
To exhibit sufficient conditions for $\lambda$ and $k$, we consider separately the cases 
$\bar{x} \in \widetilde{\Delta}_n \setminus A_{y_1, y_2,z}$ and 
$\bar{x} \in A_{y_1, y_2,z}$.

If $\bar{x} \in \widetilde{\Delta}_n \setminus A_{y_1, y_2,z}$ 
\begin{equation*}
\begin{split}
 h(\bar{x},y_1, y_2,z) & =\bar{f}(\bar{x},y_1, y_2,z)- 
 \lambda \big( y_1 ^2 + y_2^{2} + z^{2} \big)^{2} 
  \sum_{1 \leq i \leq s} g_i(\bar{x}) \cdot (g_i(\bar{x})-1)^{2k} \\
  & \ge \bar{f}(\bar{x},y_1, y_2,z)- 
 \lambda  
  \sum_{1 \leq i \leq s} |g_i(\bar{x})| \cdot (|g_i(\bar{x})|-1)^{2k} \\
 &> \frac{3}{4}f^{\bullet} - 
  \frac{\lambda s}{2k+1} 
 \end{split}
\end{equation*}
using \cite[Remark 12]{NieSchw}.
Therefore the condition $h(\bar{x},y_1, y_2,z) \ge \frac 12 f^{\bullet}$ is ensured if
\begin{equation}\label{eq:cond_k}
2k+1\ge \frac{4 \lambda  s}{f^{\bullet}}.
\end{equation}

If $\bar{x} \in A_{y_1, y_2,z}$, for any $\bar{x}_0 \in S$, by Lemma \ref{lema_desig} with $r = 2$ and $m = 4$,  we have
$$
\frac{f^{\bullet}}{4} \leq \bar{f}(\bar{x}_0, y_1, y_2, z)-\bar{f}(\bar{x}, y_1, y_2, z) \leq \frac{15}{2} \sqrt{n} \| f \|_{\bullet}
d(d+1) \| \bar{x}_0-\bar{x}\|,
$$
then
$$
\frac{f^{\bullet}}{30\sqrt{n} \| f \|_{\bullet}d(d+1)} 
\leq \| \bar{x}_0-\bar{x}\| 
$$
and therefore
\begin{equation}\label{eq:dist}
\frac{f^{\bullet}}{30\sqrt{n} \| f \|_{\bullet}d(d+1)} \leq \mbox{dist}(\bar{x},S).
\end{equation}

Using \cite[Remark 13]{EscPer}, there exist $c_1, c_2 > 0$ and 
$1 \le i_0 \le s$ such that $g_{i_0}(\bar x) < 0$ and
\begin{equation}\label{eq:loja}
\mbox{dist}(\bar x, S)^{c_1} \le -c_2g_{i_0}(\bar x).
\end{equation}
By (\ref{eq:dist}) and (\ref{eq:loja}) we have
\begin{equation}\label{desig_6}
 g_{i_0}(\bar{x})\leq - \delta.
\end{equation}
with 
$$
\delta= \frac{1}{c_2} \left( \frac{f^{\bullet}}{30\sqrt{n}\| f \|_{\bullet}d(d+1)} \right)^{c_1} > 0.
$$

On the other hand, defining $f^{\bullet}_{y_1, y_2,z}= 
\min\{\bar f(\bar x, y_1, y_2, z) \,  | \, \bar x \in S \}$,
again by Lemma \ref{lema_desig} with $r = 2$ and $m = 4$, we have that
\begin{equation}\label{desig_7}
|\bar{f}(\bar{x},y_1, y_2,z)-f^{\bullet}_{y_1, y_2,z}| \leq \frac{15}{2} \sqrt{n}\| f \|_{\bullet}d(d+1) 
\mbox{diam} (\widetilde{\Delta}_n)
= \frac{15}{\sqrt{2}} \sqrt{n}\| f \|_{\bullet}d(d+1).
\end{equation}

Then, using again \cite[Remark 12]{NieSchw}, (\ref{desig_6}) and (\ref{desig_7}) we have 
\begin{equation*}
 \begin{split}
 h(\bar{x},y_1, y_2,z)& \geq \bar{f}(\bar{x},y_1, y_2,z)
 -\lambda g_{i_0}(\bar{x})(g_{i_0}(\bar{x})-1)^{2k} - 
  \frac{\lambda(s-1)}{2k+1} \\
 & \geq \bar{f}(\bar{x},y_1, y_2,z) -f^{\bullet}_{y_1, y_2,z} +f^{\bullet}_{y_1, y_2,z} + 
 \lambda  \delta - \frac{\lambda(s-1)}{2k+1}  \\
 & \geq - \frac{15}{\sqrt{2}} \sqrt{n}\|f \|_{\bullet}d(d+1) 
 + f^{\bullet} + \lambda \delta - \frac{\lambda(s-1)}{2k+1}.
 \end{split}
\end{equation*}
Finally, the condition $h(\bar{x},y_1, y_2,z) \geq \frac12f^{\bullet}$ is ensured if
\begin{equation}\label{eq:cond_lambda}
\lambda \ge \frac{ 15\sqrt{n}\|f \|_{\bullet}d(d+1)}{\sqrt{2}\delta}  = 
\frac{ c_2 2^{c_1} (15\sqrt{n}\| f \|_{\bullet}d(d+1))^{c_1+1}}{\sqrt{2}{f^{\bullet}}^{c_1}}
\end{equation}
and 
\begin{equation}\label{eq:aux_inutil}
2k+1 \ge \frac{2\lambda(s-1)}{f^{\bullet}}. 
\end{equation}
Since (\ref{eq:cond_k}) implies (\ref{eq:aux_inutil}), it is enough for $\lambda$ and $k$ to satisfy (\ref{eq:cond_k}) and (\ref{eq:cond_lambda}). 
So for the rest of the proof we take
$$
\lambda = \frac{ c_2 2^{c_1} (15\sqrt{n}\| f \|_{\bullet}d(d+1))^{c_1+1}}{\sqrt{2}{f^{\bullet}}^{c_1}} 
= c_3\frac{(15\| f \|_{\bullet}d(d+1))^{c_1+1}}{{f^{\bullet}}^{c_1}}
> 0$$ 
with $c_3 = \frac{c_22^{c_1}\sqrt{n}^{c_1+1}}{\sqrt{2}}$
and
$$
k = \left\lceil \frac12 \left( \frac{4\lambda s}{f^{\bullet}} -1\right) \right\rceil \in \N_0.
$$
In this way, 
\begin{equation}\label{eq:cota_k}
\begin{split}
 k & \le   \frac12 \left( \frac{4\lambda s}{f^{\bullet}} -1\right) +1  \\
   & = 2c_3s \left( \frac{ 15\| f \|_{\bullet}d(d+1)}{f^{\bullet}} \right)^{c_1+1}+\frac12 \\
   & \leq c_4 \left( \frac{15\| f \|_{\bullet}d(d+1)}{f^{\bullet}} \right)^{c_1+1}
 \end{split}
\end{equation}
with $c_4 = 2c_3s + 1$. Here (and also several times after here) we use Lemma \ref{lem:norma_div_min} with $r = 2$ and $m = 4$ to ensure
$$\frac{15\| f \|_{\bullet}(d+1)}{f^{\bullet}} \ge 1.$$

Also, if we define $\ell = \deg_{\bar X} h$, we have
\begin{equation}\label{cota_e}
 \begin{split}
\ell & \leq \max \{d, (2k+1)\max_{1 \le i \le s} \deg g_i  \} \\ 
& \leq \max \left\{
d,  
\left(2c_4\left( \frac{15\| f \|_{\bullet}d(d+1)}{f^{\bullet}} \right)^{c_1+1} +1 \right) 
\max_{1 \le i \le s} \deg g_i
\right\} \\
 &
\le
c_5 \left( \frac{15\| f\|_{\bullet}d(d+1)}{f^{\bullet}} \right)^{c_1+1}
 \end{split}
\end{equation}
with $c_5 = (2c_4 + 1)\displaystyle{\max_{1 \le i \le s} \deg g_i}$.

On the other hand, using conveniently \cite[Proposition 14]{NieSchw} and (\ref{eq:cota_k}), 
\begin{equation}\label{cota_h}
 \begin{split}
\|h\|_{\bullet} 
&
\le  {\| f \|_{\bullet}} + 2\lambda s     
\max_{1 \le i \le s} \{(\deg g_i +1) (\|g_i\| +1)\} ^{2k+1}   \\[3mm]
& =  
\| f \|_{\bullet} +  
2c_3s\frac{ (15\| f \|_{\bullet}d(d+1))^{c_1+1}}{{f^{\bullet}}^{c_1}}
\max_{1 \le i \le s} \{(\deg g_i +1) (\|g_i\| +1)\}^{2k+1} \\[3mm]
& \le 
(2c_3s+1)\max_{1 \le i \le s} \{(\deg g_i +1) (\|g_i\| +1)\} \ \cdot 
\\
& \ \ \ \cdot \
\frac{ (15\| f \|_{\bullet}d(d+1))^{c_1+1}}{{f^{\bullet}}^{c_1}}
\max_{1 \le i \le s} \{(\deg g_i +1) (\|g_i\| +1)\}^{2c_4 \left( \frac{15\| f \|_{\bullet}d(d+1)}{f^{\bullet}} \right)^{c_1+1}}
\\[3mm]
& =
c_6 
\frac{ (15\| f \|_{\bullet}d(d+1))^{c_1+1}}{{f^{\bullet}}^{c_1}}
{\rm e}^{c_7 \left( \frac{15\| f \|_{\bullet}d(d+1)}{f^{\bullet}} \right)^{c_1+1} }
\end{split}
\end{equation}
with 
$c_6 = (2c_3s + 1)\displaystyle{\max_{1 \le i \le s} \{(\deg g_i +1) (\|g_i\| +1)\}}$ 
and 
$c_7 = \log \left(\displaystyle{\max_{1 \le i \le s} \{(\deg g_i +1) (\|g_i\| +1)\}^{2c_4}}\right)$.

So far we have found $\lambda$ and $k$ such that 
that $h \ge \frac12 f^{\bullet}$ in $\widetilde \Delta_n \times C^2$, together with bounds for 
$k, \ell = \deg_{\bar X} h$ and $\|h\|_\bullet$. 
Now, 
we introduce a new variable $X_0$ to  
homogenize with respect to the variables $\bar X$ and  
use P\'olya's Theorem.
Let 
$$
h=  \sum_{\substack{\beta \in \N_0^2 \\ |\beta|\leq 4}}  \sum_{0 \leq j \leq \ell} h_{j, \beta}(\bar{X})Y_1^{\beta_1}Y_2^{\beta_2}Z^{4-|\beta|} 
$$
with $h_{j, \beta} \in \R[\bar X]$ equal to zero or homogeneous of degree  $j$ for $\beta \in \N_0^2$, $0 \le |\beta| \le  4$ and 
$0 \le j \le \ell$. 
We define
$$
H= \sum_{\substack{\beta \in \N_0^2 \\ |\beta|\leq 4}}  \sum_{0 \leq j \leq \ell} h_{j, \beta}(\bar{X})
(X_0+X_1 \cdots +X_n)^{\ell-j} Y_1^{\beta_1}Y_2^{\beta_2} Z^{4-|\beta|} \in \R[X_0, \bar{X},Y_1, Y_2,Z]
$$
which is bihomogeneous in $(X_0, \bar X)$ and $(Y_1, Y_2, Z)$ of bidegree $(\ell, 4)$. 

Since $H(x_0, \bar x, y_1, y_2, z) = h(\bar x, y_1, y_2, z)$ for every $(x_0, \bar x, y_1, y_2, z) \in 
\Delta_n \times C^2$, it is clear that $H\ge\frac12f^{\bullet}$ in $\Delta_n \times C^2$.

On the other hand, for each $(y_1, y_2, z) \in C^2$, we consider $H(X_0, X, y_1, y_2, z) \in 
\R[X_0, \bar X]$. Using again \cite[Proposition 14]{NieSchw} we have 
$$
\begin{array}{rcl}
\|H(X_0, X,y_1, y_2,z) \| & 
\leq  & \displaystyle{\sum_{\substack{\beta \in \N_0^2 \\ |\beta|\leq 4}}   \sum_{0 \leq j \leq \ell} \|h_{j, \beta}(\bar X)(X_0+\cdots+X_n)^{\ell-j}y_1^{\beta_1}y_2^{\beta_2}z^{4-|\beta|} \|   }
\\[6mm]
& \leq & \displaystyle{ \sum_{\substack{\beta \in \N_0^2 \\ |\beta|\leq 4}}  \sum_{0 \leq j \leq \ell} \|h_{j, \beta }(\bar X)(X_0+\cdots+X_n)^{\ell-j} \|   }
\\[6mm]
& \leq & \displaystyle{ \sum_{\substack{\beta \in \N_0^2 \\ |\beta|\leq 4}}  \sum_{0 \leq j \leq \ell} \|h_{j, \beta}(\bar X) \| }  
\\[6mm] 
& \leq  & 15(\ell + 1)\| h \|_\bullet.
\end{array}
$$

We use now the bound for P\'olya's Theorem from \cite[Theorem 1] {PR}.
Take $N \in \N$ given by 
$$
N = \left\lfloor \frac{15(\ell+1)\ell(\ell-1)\|h\|_{\bullet}}{f^{\bullet}} - \ell \right\rfloor + 1.
$$
Then for each $(y_1, y_2 ,z) \in C^2$ we have that 
$
H\big(X_0,{\bar{X}},y_1, y_2,z\big)\left(X_0 + X_1 + \cdots + X_n \right)^N \in \R[X_0, \bar X]$
is a homogeneous polynomial such that all its coefficients are positive. 
More precisely, suppose we write
\begin{equation}\label{desig_8}
H\big(X_0,{\bar{X}},Y, Z\big)\left(X_0 + X_1 + \cdots + X_n \right)^N
= 
\sum_{\substack{\alpha=(\alpha_0,\bar \alpha) \in \N_0^{n+1} \\ |\alpha|=N+\ell}} 
b_{\alpha}(Y_1, Y_2, Z) X_0^{\alpha_0} {\bar{ X}}^{\bar \alpha}
\in \R[X_0, \bar X, Y, Z]
\end{equation}
with $b_\alpha \in \R[Y_1, Y_2, Z]$ homogeneous of degree $4$.
The conclusion is that for every $\alpha \in \N_0^{n+1}$ with $|\alpha|=N+\ell$, 
the polynomial $b_\alpha$ is positive in $C^2$, and therefore, 
since it is a homogenous polynomial, $b_\alpha$
is non-negative in $\R^3$. 

Before going on, we bound $N + \ell$ using (\ref{cota_e}) and (\ref{cota_h}) as follows.

\begin{equation}\label{eq:cota_Nmasell}
 \begin{split}
  N+ \ell & \leq \frac{15(\ell +1)\ell (\ell-1)\|h\|_{\bullet}}{f^{\bullet}}+1 \\
 & \leq \frac{15\ell^3\|h\|_{\bullet}}{f^{\bullet}}+1  \\
 & \leq 
15c_5^3c_6
\left(\frac{ 15\| f \|_{\bullet}d(d+1)}{{f^{\bullet}}}\right)^{4(c_1+1)}
{\rm e}^{c_7 \left( \frac{15\| f \|_{\bullet}d(d+1)}{f^{\bullet}} \right)^{c_1+1} }
  +1  \\
 & \leq 
c_8
\left(\frac{ 15\| f \|_{\bullet}d(d+1)}{{f^{\bullet}}}\right)^{4(c_1+1)}
{\rm e}^{c_7 \left( \frac{15\| f \|_{\bullet}d(d+1)}{f^{\bullet}} \right)^{c_1+1} }
  \\
 \end{split}
\end{equation}
with $c_8 = 15c_5^3c_6 + 1$.

Now we substitute $X_0=1-X_1- \cdots -X_n$ and $Z =1$ in (\ref{desig_8})
and we obtain
\begin{equation} \label{eq:almost_finish}
f =  
\lambda\big(Y_1^2 + Y_2^2 +  1\big)^{2}\sum_{1 \le i \le s}g_i(g_i-1)^{2k} 
+
\sum_{\substack{\alpha=(\alpha_0,\bar \alpha) \in \N_0^{n+1} \\ |\alpha|=N+\ell}}  
b_{\alpha}(Y_1, Y_2, 1) (1 - X_1 - \dots - X_n)^{\alpha_0} {\bar{ X}}^{\bar \alpha} \in \R[\bar X, Y_1, Y_2].
\end{equation}
From (\ref{eq:almost_finish}) we want to conclude that $f \in M_{\R[\bar X, Y_1, Y_2]}(g_1 \dots, g_s)$
and to find the positive constant $c$ such that the degree bound holds.

The first term on the right hand side 
of (\ref{eq:almost_finish}) clearly
belongs to $M_{\R[\bar X, Y_1, Y_2]}(g_1 \dots, g_s)$. Moreover, 
for $1 \le i \le s$, 
\begin{equation}\label{eq:cota_1er_term}
\deg \big(Y_1^2 + Y_2^2 + 1\big)^{2}g_i(g_i-1)^{2k} = 4 + (2k+1)\deg g_i.
\end{equation}

Now we focus on the second term on the right hand side 
of (\ref{eq:almost_finish}), which is itself a sum. Take a fixed 
$\alpha \in \N_0^{n+1}$ with $|\alpha|=N+\ell$.

Since 
$b_\alpha(Y_1, Y_2, 1)$ is non-negative in $\R^2$ and $\deg_{(Y_1, Y_2)}b_\alpha(Y_1, Y_2, 1) \le 4$, $b_\alpha(Y_1, Y_2, 1) \in \sum \R[Y_1, Y_2]^2$. 
Moreover, we can write $b_\alpha(Y_1, Y_2, 1)$ as a sum of squares with 
the degree of each square bounded by $4$.

Also, take $v(\alpha) =  (v_0, \bar v) \in \{0, 1\}^{n+1}$ 
such that $\alpha_i \equiv v_i \, (\mbox{mod} \ 2)$ for $0 \le i \le n$. 
Denoting $g_0 = 1 \in \R[\bar X]$, since $S \subset {\widetilde \Delta_n}^\circ$, by classical Putinar's Positivstellensatz we have representations
$$
(1-X_1- \cdots -X_n)^{v_0} \bar{ X}^{\bar v} =  \sum_{0 \le i \le s}
\sigma_{v(\alpha) i} g_i, 
$$
with $\sigma_{v(\alpha)i} \in \sum \R[\bar X]^2$ for $0 \le i \le s$, and 
then 
$$
(1 - X_1 - \dots - X_n)^{\alpha_0} {\bar{ X}}^{\bar \alpha}
= 
(1 - X_1 - \dots - X_n)^{\alpha_0-v_0} {\bar{ X}}^{\bar \alpha - \bar v}
\sum_{0 \le i \le s}
\sigma_{v(\alpha) i} g_i
$$
belongs to $M(g_1, \dots, g_s)$
since $(1 - X_1 - \dots - X_n)^{\alpha_0-v_0} {\bar{ X}}^{\bar \alpha - \bar v}
\in \R[\bar X]^2$. 

We conclude that each term in the sum belongs to $M_{\R[\bar X, Y_1, Y_2]}(g_1, \dots, g_s)$.
In addition, 
for $0 \le i \le s$ we have
\begin{equation}\label{eq:cota_2do_term}
\deg 
b_{\alpha}(Y_1, Y_2, 1) (1 - X_1 - \dots - X_n)^{\alpha_0-v_0} 
{\bar{ X}}^{\bar \alpha - \bar v} \sigma_{v(\alpha)i}g_i 
\le 4 + N + \ell + c_9
\end{equation}
with $c_9 = \max \{ \deg \sigma_{vi}g_i \ |
\ v \in \{0, 1\}^{n+1},  0 \le i \le s\}$.

To finish the proof, we only need to bound simultaneously the right hand side
of (\ref{eq:cota_1er_term}) and 
(\ref{eq:cota_2do_term}).

On the one hand, using (\ref{eq:cota_k}), 
\begin{equation*}
\begin{split}
4 + (2k+1)\max_{1 \le i \le s} \deg g_i & \le 
4 + \left(2 c_4 \left( \frac{15\| f \|_{\bullet}d(d+1)}{f^{\bullet}} \right)^{c_1+1} + 1\right) 
\max_{1 \le i \le s} \deg g_i \\
& \le
c_{10}  \left(\frac{\| f \|_{\bullet}d^2}{f^{\bullet}} \right)^{c_1+1}  
\end{split}
\end{equation*}
with $c_{10} = (2c_4 + 5)30^{c_1 + 1}\displaystyle{\max_{1 \le i \le s} \deg g_i}$, since 
$d \ge 1$. 

On the other hand, using (\ref{eq:cota_Nmasell}), 
\begin{equation*}
\begin{split}
4 + N + \ell + c_9 &
\le 
4 + 
c_8
\left(\frac{ 15\| f \|_{\bullet}d(d+1)}{{f^{\bullet}}}\right)^{4(c_1+1)}
{\rm e}^{c_7 \left( \frac{15\|\bar f \|_{\bullet}d(d+1)}{f^{\bullet}} \right)^{c_1+1} } + c_9 \\
&
\le
c_{11}
\left(\frac{ \| f \|_{\bullet}d^2}{{f^{\bullet}}}\right)^{4(c_1+1)}
{\rm e}^{c_{12} \left(  \frac{\|f \|_{\bullet}d^2}{f^{\bullet}} \right)^{c_1+1} }  \\
\end{split}
\end{equation*}
with $c_{11} = (4 + c_8 + c_9)30^{4(c_1+1)}$ and $c_{12} = c_7 30^{c_1 + 1}$, again since 
$d \ge 1$. 

Finally, we define $c$ as the positive constant obtained applying \cite[Lemma 18]{EscPer}  to 
the $6$-uple $(c_{10},$ $c_1+1,$ $c_{11}, 4(c_1 + 1), c_{12}, c_1 + 1)$.
\end{proof}

We are ready now to prove Theorem \ref{th:2_variables_grado_4}. 
The proof consists basically 
in a linear change of variables and the application of 
Proposition \ref{prop:main}, as in the proof of \cite[Theorem 7]{EscPer}.

\begin{proof}{Proof of Theorem \ref{th:2_variables_grado_4}:}

We consider the affine change of variables $\ell:\R^n \to \R^n$ given by
$$
\ell(X_1, \dots, X_n) = \left(\frac{X_1 + 1}{2n}, \dots, 
\frac{X_n + 1}{2n}\right).
$$
For $0 \le i \le s$, we take 
$\tilde{g}_i(\bar X)=g_i(\ell^{-1}(\bar X)) \in \R[\bar X]$ and we define
$$
\widetilde{S}= \{\bar x \in \R^n \ | \ \tilde{g}_1(\bar x) \geq 0, \dots,  
\tilde{g}_s(\bar x) \ge 0 \}.
$$
It is easy to see that
$$
\emptyset \ne \widetilde{S} = \ell(S) \subseteq \widetilde{\Delta}_n^{\circ}.
$$
Moreover, since $M(g_1, \dots, g_n)$ is archimedean, 
$M(\tilde g_1, \dots, \tilde g_s)$ is also archimedean
(see \cite[Proof of Theorem 7]{EscPer}).

Let 
$f \in \R[\bar X, Y_1, Y_2]$ be as in the statement of Theorem \ref{th:2_variables_grado_4}
and let $\tilde{f}(\bar{X},Y_1, Y_2)=f(\ell^{-1}(\bar{X}),Y_1, Y_2) \in \R[\bar X, Y_1, Y_2]$.
It can be easily seen that
$\tilde f$ is positive on $\tilde S \times \R^2$, 
$\deg_{\bar X}\tilde f = \deg_{\bar X} f = d$, 
$\deg_{(Y_1, Y_2)}\tilde f = \deg_{(Y_1, Y_2)} f = 4$, $\tilde f$ satisfies condition 
$(\dag)$ on $\widetilde S$
and
$$
\min\{\bar{\tilde f}(\bar x, y_1, y_2, z) \,  | \, \bar x \in \widetilde S, (y_1, y_2, z) \in C^2 \} 
= \min\{\bar{f}(\bar x, y_1, y_2, z) \,  | \, \bar x \in  S, (y_1, y_2, z) \in C^2 \} = f^{\bullet} >0. 
$$ 
In addition, $\| \tilde f \|_{\bullet} \le \| f \|_{\bullet}(3n)^d$
(again, see \cite[Proof of Theorem 7]{EscPer}).

Take $c$ as the positive constant from Proposition \ref{prop:main} applied to 
$\tilde g_1, \dots, \tilde g_s$. Therefore, 
$\tilde f$ can be written as 
$$
\tilde f = \tilde \sigma_0 + \tilde \sigma_1\tilde g_1 + \dots + \tilde\sigma_s \tilde g_s \in 
M_{\R[\bar X, Y]}(\tilde g_1, \dots, \tilde  g_s)
$$
with $\tilde\sigma_0, \tilde\sigma_1,  \dots, \tilde\sigma_s \in \sum \R[\bar X, Y_1, Y_2]^2$ and 
$$
\deg(\tilde\sigma_0), \deg(\tilde\sigma_1 \tilde g_1),  \dots, \deg(\tilde\sigma_s\tilde g_s)
\le 
c {\rm e}^{\left( \frac{\|  f \|_{\bullet}d^2(3n)^d}{f^{\bullet}} \right)^{c}}
$$
and the final representation for $f$ is simply obtained by composing with $\ell$. 
\end{proof}

The proof of Theorem \ref{th:r_variables_grado_2} is 
a straightforward adaptation of the proof of 
Theorem \ref{th:2_variables_grado_4} (and Proposition \ref{prop:main}), and we omit it.

To prove Theorem \ref{th:2_juegos_de_variables}, we need the following 
two auxilary lemmas, which are again slight variations of 
\cite[Lemma 16]{EscPer} and 
\cite[Lemma 11]{NieSchw}.

\begin{lemma}
Let $f \in \R[\bar{X},Y_{1}, \bar Y_2]$ such that $\deg_{\bar{X}}f=d$ 
$\deg_{Y_{1}}f=m$, 
and 
$\deg_{\bar Y_2} f=2$. 
For every $\bar x \in \widetilde \Delta_n$, $(y_1, z_1) \in C$ 
and $(\bar y_2, z_2) \in C^r$,
$$
|\bar{\bar f}(\bar x, y_{1}, z_1, \bar y_2, z_2)| \le \frac12\| f\|_{\bullet} (m+1)(r+1)(r+2)(d+1).
$$
\end{lemma}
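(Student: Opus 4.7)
The plan is to mimic the proof of Lemma \ref{lem:norma_div_min} essentially verbatim, treating the two ``$Y$-blocks'' independently and using the simplex structure in $\bar X$ only once. The guiding principle, shared with \cite[Lemma 11]{NieSchw}, is that the variables $\bar X$ are controlled through the multinomial coefficients packaged into the definition of $\|\cdot\|_\bullet$, whereas the $Y$-variables are handled directly from the fact that on the relevant spheres every coordinate is bounded by $1$ in absolute value.

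Concretely, I would first write
$$
\bar{\bar f}(\bar X, Y_1, Z_1, \bar Y_2, Z_2) \;=\; \sum_{0 \le i \le m}\;\sum_{\substack{\beta \in \N_0^r \\ |\beta|\le 2}} f_{i,\beta}(\bar X)\, Y_1^{\,i} Z_1^{\,m-i}\, \bar Y_2^{\,\beta} Z_2^{\,2-|\beta|}
$$
with $f_{i,\beta} \in \R[\bar X]$ of degree at most $d$ and $\|f_{i,\beta}\| \le \|f\|_\bullet$. For $(y_1,z_1)\in C$ each coordinate satisfies $|y_1|,|z_1|\le 1$, hence $|y_1^{\,i}z_1^{\,m-i}| \le 1$; and for $(\bar y_2, z_2) \in C^r$ each coordinate of $(\bar y_2,z_2)$ lies in $[-1,1]$, hence $|\bar y_2^{\,\beta} z_2^{\,2-|\beta|}| \le 1$.

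The $\bar X$-bound is the standard simplex estimate: writing $f_{i,\beta} = \sum_{|\alpha|\le d}\binom{|\alpha|}{\alpha} a_{\alpha,i,\beta}\bar X^\alpha$ and using $0\le x_1+\dots+x_n\le 1$ on $\widetilde \Delta_n$,
$$
|f_{i,\beta}(\bar x)| \;\le\; \|f_{i,\beta}\| \sum_{j=0}^d (x_1+\dots+x_n)^j \;\le\; \|f\|_\bullet (d+1).
$$
It then remains to count the index set: the number of pairs $(i,\beta)$ with $0\le i\le m$ and $|\beta|\le 2$ is $(m+1)\cdot \binom{r+2}{2} = (m+1)(r+1)(r+2)/2$. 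Applying the triangle inequality to the displayed expansion of $\bar{\bar f}$ and combining these three bounds yields exactly
$$
|\bar{\bar f}(\bar x, y_1, z_1, \bar y_2, z_2)| \;\le\; \tfrac12 \|f\|_\bullet (m+1)(r+1)(r+2)(d+1).
$$

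There is no real obstacle here; the whole argument is a routine adaptation of \cite[Lemma 11]{NieSchw} and of Lemma \ref{lem:norma_div_min}. The only point needing mild care is the combinatorial count of the pair-index set: the two blocks $Y_1$ and $\bar Y_2$ are separate, so their multi-indices multiply rather than combining into a single $\binom{m+r+2}{r+1}$-type term, which is precisely what produces the factor $(m+1)(r+1)(r+2)/2$ rather than something larger.
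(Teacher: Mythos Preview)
Your proof is correct and is precisely the straightforward adaptation that the paper alludes to when it calls this lemma a ``slight variation'' of \cite[Lemma 16]{EscPer} and \cite[Lemma 11]{NieSchw}; the paper does not give its own proof. The simplex estimate via the multinomial expansion, the trivial bound $|y_1^iz_1^{m-i}|\le 1$ and $|\bar y_2^{\beta}z_2^{2-|\beta|}|\le 1$ on the spheres, and the count $(m+1)\binom{r+2}{2}$ of the index pairs are exactly the ingredients one expects, and together they yield the stated bound.
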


\begin{lemma}
Let $f \in \R[\bar{X},Y_{1}, \bar Y_2]$ such that $\deg_{\bar{X}}f=d$ 
$\deg_{Y_{1}}f=m$, 
and 
$\deg_{\bar Y_2} f=2$. 
For every 
$\bar{x}_1,\bar{x}_2 \in \widetilde{\Delta}_n$ 
$(y_1, z_1) \in C$ 
and $(\bar y_2, z_2) \in C^r$, 
$$
 |\bar{\bar f}(\bar{x}_1,y_{1}, z_1, \bar y_2, z_2)-\bar{\bar f}(\bar{x}_2,y_{1}, z_1, \bar y_2, z_2)| 
 \leq \frac{1}{4} \sqrt{n} \| f \|_{\bullet}(m+1)(r+1)(r+2)d(d+1) \| \bar{x}_1-\bar{x}_2\|.
 $$
\end{lemma}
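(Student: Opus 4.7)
The statement is a Lipschitz-type estimate for $\bar{\bar f}$ with respect to the $\bar X$ variables, uniform over the homogenization spheres $C$ and $C^r$, and is the direct analog for the bihomogenized setting of Lemma~\ref{lema_desig}. The plan is to freeze the $(y_1, z_1, \bar y_2, z_2)$ variables on the respective spheres, so that $\bar{\bar f}$ becomes an element of $\R[\bar X]$, and then to apply the standard Nie--Schweighofer gradient bound on the simplex $\widetilde\Delta_n$ combined with the mean value inequality along the segment joining $\bar x_1$ and $\bar x_2$.

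First, for fixed $(y_1, z_1) \in C$ and $(\bar y_2, z_2) \in C^r$, I would consider
$$
g(\bar X) := \bar{\bar f}(\bar X, y_1, z_1, \bar y_2, z_2) \in \R[\bar X],
$$
which is a polynomial of degree at most $d$. Writing
$$
f = \sum_{0 \le i \le m} \, \sum_{|\beta| \le 2} \, \sum_{|\alpha| \le d} \binom{|\alpha|}{\alpha} a_{\alpha, i, \beta} \bar X^\alpha Y_1^i \bar Y_2^\beta,
$$
the weighted coefficient of $\bar X^\alpha$ in $g$ equals $\sum_{i,\beta} a_{\alpha, i, \beta} y_1^i z_1^{m-i} \bar y_2^\beta z_2^{2-|\beta|}$. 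By the sphere constraints, each of the entries $y_1, z_1, y_{21}, \dots, y_{2r}, z_2$ has absolute value at most $1$, so this coefficient is bounded in absolute value by $\|f\|_\bullet$ times the number of pairs $(i, \beta)$ with $0 \le i \le m$ and $|\beta| \le 2$, that is $(m+1)\binom{r+2}{2} = \frac{(m+1)(r+1)(r+2)}{2}$. Hence $\|g\| \le \|f\|_\bullet (m+1)(r+1)(r+2)/2$.

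Next, for any $p = \sum_{|\alpha| \le d} \binom{|\alpha|}{\alpha} b_\alpha \bar X^\alpha$ in $\R[\bar X]$, the standard computation used in \cite[Lemma~11]{NieSchw} (reindexing $\partial p/\partial X_j$ via $\binom{|\alpha|}{\alpha} \alpha_j = |\alpha|\binom{|\alpha|-1}{\alpha - e_j}$ and using $\sum_{|\gamma|=k}\binom{k}{\gamma} \bar x^\gamma = (x_1+\cdots+x_n)^k \le 1$ on $\widetilde\Delta_n$) gives $|\partial p/\partial X_j(\bar x)| \le \|p\| \cdot d(d+1)/2$ for every $\bar x \in \widetilde\Delta_n$ and every $1 \le j \le n$. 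Applied to $g$, this yields $\|\nabla g(\bar x)\| \le \sqrt{n}\, \|g\| \, d(d+1)/2$ on $\widetilde\Delta_n$, and the mean value inequality along the segment from $\bar x_1$ to $\bar x_2$ inside the convex set $\widetilde\Delta_n$ delivers
$$
|g(\bar x_1) - g(\bar x_2)| \le \frac{\sqrt{n}}{4} \|f\|_\bullet (m+1)(r+1)(r+2) d(d+1) \|\bar x_1 - \bar x_2\|,
$$
which is the bound claimed, uniformly in $(y_1, z_1, \bar y_2, z_2)$.

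There is no substantial obstacle: the argument is parallel to the one for Lemma~\ref{lema_desig}, and the only point requiring care is the combinatorial bookkeeping of the factor $(m+1)(r+1)(r+2)/2$, which arises from splitting the $\bar Y$ block into $Y_1$ (of degree $\le m$) and $\bar Y_2$ (of degree $\le 2$) and replaces the single binomial $\binom{m+r}{r}$ appearing in Lemma~\ref{lema_desig}. Combined with the simplex-gradient factor $\sqrt{n}\, d(d+1)/2$, the two halves $\frac{1}{2}\cdot\frac{1}{2}=\frac{1}{4}$ produce exactly the constant in the statement.
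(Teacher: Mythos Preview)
Your proof is correct and follows exactly the approach the paper has in mind: the paper does not spell out a proof of this lemma but simply states it is a ``slight variation of \cite[Lemma 16]{EscPer} and \cite[Lemma 11]{NieSchw}'', and your argument---freezing $(y_1,z_1,\bar y_2,z_2)$ on $C\times C^r$, bounding $\|g\|$ by the count $(m+1)\binom{r+2}{2}\|f\|_\bullet$, and invoking the Nie--Schweighofer simplex gradient estimate plus the mean value inequality---is precisely that variation. The bookkeeping of the factor $\tfrac14=\tfrac12\cdot\tfrac12$ is also correct.
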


Then, the proof of Theorem \ref{th:2_juegos_de_variables} is also 
a straightforward adaptation of the proof of Theorem 
\ref{th:2_variables_grado_4} (and Proposition \ref{prop:main}), with the only caveat that the auxiliary polynomial 
$h$ (at the beginning of the proof of Proposition \ref{prop:main}) should be defined as
$$
h= \bar{\bar{f}}- \lambda\big( Y_1^2 + Z_1^2)^{\frac{m}2}(Y_{21}^2 + \dots + Y_{2r}^2 +  Z_2^2 \big) 
\sum_{1 \leq i \leq s} g_i \cdot (g_i-1)^{2k}  \in \R[\bar{X},Y_1, Z_1,  \bar Y_2,Z_2]
$$
and then
$$
\| h \|_\bullet \le \| f \|_\bullet + \lambda s 2^{\frac{m}2}\max_{1\le i \le s}\{(\deg g_i + 1)(\|g_i\| + 1)  \}^{2k+1}.
$$

\bigskip

\textbf{Acknowledgements:} We are very grateful to Michel Coste for 
suggesting us these extensions of our results from \cite{EscPer}.


\begin{thebibliography}{99}

\bibitem{BCR} J. Bochnak, M. Coste, M.-F. Roy, Real algebraic geometry. 
 Ergebnisse der Mathematik und ihrer Grenzgebiete 36, Springer, Berlin,  1998.

\bibitem{CLR}
M. Choi, , T. Lam, B. Reznick, Real zeros of positive semidefinite forms. I. \emph{Math. Z.} 171 (1980), no. 1, 1--26. 

 
 
\bibitem{deKLau} E. de Klerk, M. Laurent, Error bounds for some semidefinite programming approaches to polynomial minimization on the hypercube. \emph{SIAM J. Optim.} 20 (2010), no. 6, 3104--3120.

\bibitem{Djo}
D. \v{Z}. Djokovi\'c, Hermitian matrices over polynomial rings. 
\emph{J. Algebra} 43 (1976), no. 2, 359--374.


\bibitem{EscPer} P. Escorcielo, D. Perrucci, A version of Putinar's Positivstellensatz for cylinders. \emph{J. Pure Appl. Algebra} 224 (2020), no. 12, 106448, 16 pp. 
 
 


\bibitem{KuhlMars}
S. Kuhlmann, M. Marshall, 
Positivity, sums of squares and the multi-dimensional moment problem. 
\emph{Trans. Amer. Math. Soc.} 354 (2002), no. 11, 4285--4301. 


\bibitem{KuhlMarsSchw}
S. Kuhlmann, M. Marshall, N. Schwartz, 
Positivity, sums of squares and the multi-dimensional moment problem. II. 
\emph{Adv. Geom.} 5 (2005), no. 4, 583--606. 


\bibitem{Mag} V. Magron, 
Error bounds for polynomial optimization over the hypercube using Putinar type representations. \emph{Optim. Lett.} 9 (2015), no. 5, 887--895.


\bibitem{Marshall_book}
M. Marshall, Positive polynomials and sums of squares. 
Mathematical Surveys and Monographs, 146. 
American Mathematical Society, Providence, RI, 2008.


\bibitem{Mar}
M. Marshall, Polynomials non-negative on a strip. \emph{Proc. Amer. Math. Soc.} 138 (2010), no. 5, 1559--1567.


\bibitem{NieSchw} J. Nie, M. Schweighofer, 
On the complexity of Putinar's Positivstellensatz. 
\emph{J. Complexity} 23 (2007), no. 1, 135--150. 

\bibitem{Pow} V. Powers, 
Positive polynomials and the moment problem for cylinders with compact cross-section. 
\emph{J. Pure Appl. Algebra} 188 (2004), no. 1-3, 217--226. 

\bibitem{PR} V. Powers, B. Reznick, A new bound for P\'olya's theorem with applications 
to polynomials positive on polyhedra. 
\emph{J. Pure Appl. Algebra} 164 (2001), no. 1-2, 221--229. 



\bibitem{Put}  M. Putinar, 
Positive polynomials on compact semi-algebraic sets. 
\emph{Indiana Univ. Math. J.} 42 (1993), no. 3, 969--984.

\bibitem{Schm} K. Schm\"udgen, 
The K-moment problem for compact semi-algebraic sets. 
\emph{Math. Ann.} 289 (1991), no. 2, 203--206. 

\bibitem{Schw} M. Schweighofer, 
On the complexity of Schm\"udgen's Positivstellensatz. 
\emph{J. Complexity} 20 (2004), no. 4, 529--543. 



\end{thebibliography}
\end{document}